\documentclass{siamltex}
\usepackage[latin1]{inputenc}
\usepackage[english]{babel}
\usepackage{graphicx}
\usepackage{amsmath,amsfonts,amssymb,amsopn,amscd}
\usepackage{bbm,wasysym,stmaryrd}
\usepackage{subfigure}
\usepackage{color}
\usepackage{hyperref}
\usepackage[normalem]{ulem}
\graphicspath{{./Figs}}


\newcommand{\R}{\mathbb{R}}
\newcommand{\N}{\mathbb{N}}

\newcommand{\erf}{\mathrm{erf}}

\newcommand{\der}[2]{ \frac{\text{d} #1}{\text{d} #2} }  

\newenvironment{remark} {\par {\noindent \it \sc Remark.} \small \it } {}
\newenvironment{remarks} {\par {\noindent \it \sc Remarks.} \small \it } {}

\newcommand{\Exp}[1]{\mathbb{E}\left [ #1 \right]}

\newcommand{\eps}{\varepsilon}
 
\newcommand{\M}{\mathcal{M}}



\graphicspath{{Figures/}}

\title{Noise-induced canard and mixed-mode oscillations in large stochastic networks with multiple timescales}

\author{ Jonathan Touboul\footnotemark[3]\ \footnotemark[4]\ \footnotemark[1] \and Martin Krupa\footnotemark[2] \and Mathieu Desroches\footnotemark[2]}

\begin{document}
\maketitle
\renewcommand{\thefootnote}{\fnsymbol{footnote}}
\footnotetext[3]{The Mathematical Neurosciences Laboratory, Center for Interdisciplinary Research in Biology (CNRS UMR 7241, INSERM U1050, UPMC ED 158, MEMOLIFE PSL*) Coll\`ege de France, 11 place Marcelin Berthelot, 75005 Paris, }
\footnotetext[4]{BANG Laboratory, Inria Paris-Rocquencourt.}
\footnotetext[1]{\texttt{jonathan.touboul@college-de-france.fr}}
\footnotetext[2]{SISYPHE Laboratory, Inria Paris-Rocquencourt.}

\begin{abstract}
	We investigate the dynamics of large stochastic networks with different timescales and nonlinear mean-field interactions. After deriving the limit equations for a general class of network models, we apply our results to the celebrated Wilson-Cowan system with two populations with or without slow adaptation, paradigmatic example of nonlinear mean-field network. This system has the property that the dynamics of the mean of the solution exactly satisfies an ODE. This reduction allows to show that in the mean-field limit and in multiple populations with multiple timescales, noise induces canard explosions and Mixed-Mode Oscillations on the mean of the solution. This sheds new light on the qualitative effects of noise and sensitivity to precise noise values in large stochastic networks. We further investigate finite-sized networks and show that systematic differences with the mean-field limits arise in bistable regimes (where random switches between different attractors occur) or in mixed-mode oscillations, were the finite-size effects induce early jumps due to the sensitivity of the attractor. 
\end{abstract}
\begin{keywords} 
mean field equations, neural mass models, bifurcations, noise, dynamical systems
\end{keywords}

\begin{AMS}
34E15, 
60F99, 
34E17, 
34F05, 
34C15, 
34C23, 
60B10, 
34C25  
\end{AMS}

\pagestyle{myheadings}
\thispagestyle{plain}
\markboth{M. DESROCHES, M. KRUPA \& J. TOUBOUL}{Stochastic Networks with multiple timescales}

\renewcommand{\thefootnote}{\arabic{footnote}}

\section*{Introduction}
At functional scales, cortical networks are composed of a large number of cells subject to an intense noise. Such networks process and encode information collectively, and in the variety of information transmitted, synchronous activity and periodic behaviors are ubiquitous~\cite{kandel2000principles,buzsaki2009rhythms}, both in physiological (associated to memory and attention) and pathological (epilepsy) states. Oscillatory patterns observed can be extremely complex. Theoretical descriptions of such complex oscillations have been satisfactorily performed in deterministic models of single neurons, and are generally attributed to the presence of several very different time scales in the dynamics. The analysis of these systems shed light on important and sensitive dynamical scenarios such as canard explosions (see for example \cite{krupa-kopell:08}). The question we address in this work is whether such oscillatory patterns can arise in the context of large-scale neuronal aggregates, including a very large number of neurons, strongly interconnected and subject to noise. In other words, we will be interested in complex oscillatory patterns that can appear in large scale neuronal assemblies in the presence of noise. As was the case of deterministic small dimension model, we will specifically focus on the {\it canard phenomenon} and the related phenomenon of {\it Mixed Mode Oscillations} (MMOs). 

Canard phenomenon, known also as {\em canard explosion} is a transition from a small, Hopf type oscillation to a relaxation oscillation, occurring upon variation of a parameter. This transition has been first found in the context of  the van der Pol equation~\cite{eb-jlc-fd-md_81} and soon after in numerous models of phenomena occurring in engineering and in chemical reactions~\cite{mb_88}.
A common feature of all these models is the presence of time scale separation (slow and fast variables). 
A particular feature of canard explosion is that it takes place in a very small parameter interval. 
For the van der Pol system, where the ratio of the timescales is given by a small parameter $\eps$, the width of this parameter interval can be estimated by $O(\exp(-c/\eps)$, where $c>0$ is a fixed positive constant. The transition occurs through a sequence of {\it canard cycles} whose striking feature is that they contain segments following closely a stable slow manifold and subsequently an unstable slow manifold.  

The work on canard explosion led to investigations of slow/fast systems in three dimensions, with two slow and one fast variables. In the context of these systems the concept of a {\em canard solution} or simply {\em canard} has been introduced, as a solution passing from a stable to an unstable slow manifold \cite{eb_90,am-ps-hl-eg_98,ps-mw_01}. Canards arise near so called folded singularities of which the best known is folded node  \cite{eb_90,ps-mw_01,mw_05}. Unlike in systems with one slow variable canards occur robustly (in $O(1)$ parameter regions). Related to canards are Mixed Mode Oscillations (MMOs), which are trajectories that combine small oscillations and large oscillations of relaxation type, both recurring in an alternating manner. Recently there has been a lot of interest in MMOs that arise due to the presence of canards, starting with the work of Milik, Szmolyan, Loeffelmann and Groeller \cite{am-ps-hl-eg_98}.  The small oscillations arise during the passage of the trajectories near a folded node or a related folded singularity. The dynamics near the folded singularity is transient, yet recurrent: the trajectories return to the neighborhood of the folded singularity by way of a global return mechanism \cite{mb-mk-mw_06}.

Due to the sensitivity of this phenomena, one may expect that canard explosions disappear in the presence of noise. This is implied by the results of~\cite{berglund-gentz-kuehn:12}, who study a system with small noise. Using a pathwise approach, they show that small noise perturbations dramatically disturb the behavior of the slow-fast system they consider, hiding small oscillations in the random fluctuation and having a positive probability to escape from the canard solution. 
In this picture, it seems relatively hopeless to obtain at the levels of noise typical of the cortical activity, and at the dimension of the system, collective canard phenomena and individual canards with a high degree of reproducibility and regularity. However, in large scale stochastic systems, averaging effects can take place, regularizing the probability distribution of the macroscopic activity, as well as that of individual cells. These macroscopic behaviors and the averaging effects arising at the level of large networks are mathematically characterized by limit systems, the \emph{mean-field limits}. In the present manuscript  we show that canard explosion and complex oscillatory pattern appear in the presence of (not necessarily small) noise in these mean-field limits. Moreover, we show that such phenomena may arise through noise induced bifurcations. 

Usual computational neurosciences descriptions of large scale networks rely on heuristic models which were derived in the seminal works of Wilson, Cowan and Amari~\cite{amari:72,amari:77,wilson-cowan:72,wilson-cowan:73}. These models implicitly make the assumption that noise cancels out through averaging effects in the asymptotic regime where the number of neurons goes to infinity. The analysis of these heuristic models successfully accounted for a number of cortical phenomena such as spatio-temporal pattern formation in spatially extended models~\cite{ermentrout:98,ermentrout-cowan:79,bressloff-cowan-etal:02}. 

Indeed, in order to uncover, in large scale noisy system, the presence of fine effects such as complex oscillatory patterns or canard explosions, one needs to very precisely describe the limits of the network activity as the number of neurons increases. This is why we will apply a recent developed approach based on the theory of interacting particle systems to our networks. This approach was used in a very general context including nonlinear models of spiking neurons, spatial extension and propagation delays in~\cite{touboulNeuralfields:11}. The resulting macroscopic states are described through extremely complex equations, and in general do not provide sufficient information on the dynamics of the global variables. However, in a simple Amari-Hopfield type of models, it was shown in~\cite{touboulNeuralfields:11}
that under the condition that initial conditions are Gaussian that the equations can be exactly reduced to 
a set of deterministic differential equations  governing the two first moments of the solutions, and these were used to analyze the dynamics of the macroscopic regimes~\cite{touboul-hermann:11,touboulNeuralFieldsDynamics:11}. This approach is precisely the one chosen in the present article. 

In contrast with the previous studies in this domain, we will consider here that the neurons interact through a nonlinear function of a mean-field term corresponding to the current generated at the level of each population. This model, known as the Wilson-Cowan model, is biologically more relevant. Moreover, in the original article of Wilson and Cowan~\cite{wilson-cowan:72}, the system is introduced with a slow-fast structure that, to the best of our knowledge, was never analyzed in the perspective of canard explosions and MMOs. This model has been widely studied in the context of computational neuroscience, and seems particularly suited for the analysis of the dynamics of large networks. From the computational neuroscience viewpoint, this analysis will shed light on the finite-dimensional Wilson-Cowan system, since incidentally the reduced mean-field equations will be a particular case of Wilson-Cowan system. From the mathematical viewpoint, the mean-field equations arising in the case of nonlinear global coupling were never considered in literature and lead to a number of technical questions that will be treated in the present article. Moreover, that model has the advantage that the solutions of the mean-field equations have their statistical expectation satisfying a system of ordinary differential equations, regardless of the initial condition chosen, allowing for an application of bifurcation theory to analyze the system. 
The relative simplicity of the model will allow us to go very deep into the mathematical analysis of large scale neuronal networks in the presence of noise and different timescales, and address the problem of the presence of complex oscillatory patterns and canards in such systems.  

The paper is organized as follows. Section~\ref{sec:MFLimits} is devoted to the introduction of the models and the derivation of the mean-field equations. This section deals with general neuron models with nonlinear synapses, multiple populations, timescales and noise. We apply this analysis to the Wilson-Cowan system and derive the related moment equations, a set of ordinary differential equations. The obtained equations are then analyzed in section~\ref{sec:Bifurcations}, and we show that the system present canard explosions that can lead, in different contexts, to mixed-mode oscillations as the noise parameter is varied. These phenomena are recovered in numerical simulations of the finite-size network equations as shown in section~\ref{sec:Network} where finite-size effects are also analyzed.  

\section{Network models and their mean field limits}\label{sec:MFLimits}
This section introduces a general model of multi-population neuronal network and derives the related macroscopic mean-field equations arising as a limit when the network size goes to infinity.

\paragraph{Models}
We are interested in the large scale behavior arising from the nonlinear coupling of a large number $N$ of stochastic diffusion processes representing the membrane potential of neurons. Each neuron belongs to a specific population $\alpha\in\{1\cdots P\}$, characterizing the dynamics of neurons and their time constants, and the interaction with other neurons. Each population is composed of a large number of neurons denoted $(N_{\alpha},\alpha=1\cdots P)$. The state of neuron $i$ in population $\alpha$ is described by a variable $X^i_t\in E=\R^d$ typically describing the membrane potential of the cell and different ionic concentrations. The intrinsic dynamics of this variable (disregarding input and interactions with other neurons) is governed by a nonlinear function $f_{\alpha}$ and a time constant $\tau_{\alpha}$. 

The action of neurons of population $\beta$ onto neurons of population $\alpha$ will be modeled through a generalized version of Wilson-Cowan system, as a nonlinear sigmoidal transformation of a the average of the activity of all cells, weighted by the typical synaptic connectivity $J_{\alpha\beta}$. Beyond network interactions, neurons receive inputs corresponding to the sum of a deterministic current $I^{\alpha}_t$ only depending on the population they belong to, and stochastic current driven by independent stochastic process $\xi^i_t$ with common law (denoted with a slight abuse of notations $\xi^{\alpha}_t$), that will be considered to be Ornstein-Uhlenbeck processes, taking into account the synaptic filtering. 
In their most general form, the class of models analyzed here satisfy the network equations:
 \begin{equation}\label{eq:GeneralSigmoidInter}
	\tau_{\alpha}\der{X^i_t}{t} = f_{\alpha}(X^i_t) + S_{\alpha}\left ( X^i_t,\sum_{\beta=1}^P \frac{J_{\alpha\beta}}{N_{\beta}} \sum_{p(j)=\beta} X^j_t  + I^{\alpha}_t+\xi^i_t\right).
\end{equation}
where $S_{\alpha}(x,y)$ represents the effect of an input current $y$ onto a cell of population $\alpha$ with potential $x$, $\sum_{\beta=1}^P \frac{J_{\alpha\beta}}{N_{\beta}} \sum_{p(j)=\beta} X^j_t$ the total current received through the network, and $p:\N\mapsto \{1\cdots P\}$ is the population function associating to a neuron $j$ the population $\beta$ it belongs to. 

This class of models is particularly interesting since it allows dealing with a number of situations arising in neurosciences. A prominent example of this class that will be useful in our developments is the Wilson-Cowan model, or activity-based firing rate model~\cite{wilson-cowan:72,wilson-cowan:73}. This model describes the mean firing-rate variable of a neuron (a real variable) of neuron $i$ in the population $\alpha$ through the differential equation:
	\begin{equation}\label{eq:WCNet}
		\tau_{\alpha}\frac{dX^i_t}{dt} = -X^{i}_t + S_{\alpha} \Big(\sum_{\beta=1}^P \frac{J_{\alpha\beta}}{N_{\beta}} \sum_{p(j) = \beta} X^j_t + I^{\alpha}_t + \xi^i_t\Big)
	\end{equation}
In this model, the variable $X^i_t$ represents the activity of the cell, and the right hand side is the incoming firing-rate to the cell $i$, filtered through the voltage to rate function $S_{\alpha}$ characterizing the integration properties of the synapse when subject to a input current (see~\cite{ermentrout:98}).
	
Another model of this class allows taking into account finer phenomena arising in the cell's activity motivating the description of additional variables, such as slow adaptation currents~\cite{curtu-rubin:11} or slow modulation currents~\cite{izhikevich2000neural}. These models correspond to minimal descriptions of systems with slow adaptation proposed by Izhikevich in~\cite{izhikevich2000neural}: in two-dimensional systems, a variety of behavior can be accounted for using a single slow adaptation. We slightly generalize this model and define a multi-population networks with mean-field slow modulation current. These systems are described by the following equations, for neuron $i$ in population $\alpha$:
	\begin{equation}\label{eq:SlowModulation}
		\begin{cases}
			\tau_{\alpha}\der{X^i_t}{t}&=-X^i_t +S_{\alpha}(\sum_{\beta=1}^P \frac{J_{\alpha\beta}}{N_{\beta}}\sum_{p(j)=\beta} X^j_t +\lambda_{\alpha} U^i_t+I^{\alpha}_t+\xi^{i,1}_t)\\
			\der{U^i_t}{t}&= \varepsilon_{\alpha}(k+\gamma U^i_t-\sum_{\beta=1}^P \frac 1 {N_{\beta}} \sum_{p(j)=\beta} X^j_t)
		\end{cases}
	\end{equation}
	
	Beyond these simple models that will be exactly reducible to ODEs, the class of models includes classical models in neurosciences such as the Fitzhugh-Nagumo model ($X\in\R^2$, $f_{\alpha}$ has a cubic and an affine component, and $S(x,y)$ is either a linear function when considering electrical synapses or a sigmoid transform when considering chemical synapses) or the Hodgkin-Huxley system. These models differ from usual systems analyzed in the mean-field limit in that there is a nonlinear summation of the interactions, here through our voltage-to-rate function. Limits of such interacting stochastic processes were the number of neurons tends to infinity were never addressed for such interactions considering the nonlinear transform $S$. We now derive the limits of these stochastic networks equations as the number of neurons go to infinity.

\paragraph{Macroscopic Limits}

Based on the general network given by equations~\eqref{eq:GeneralSigmoidInter}, we derive the macroscopic mean-field limit under a few assumptions on the dynamics. For the sake of simplicity, we denote by $f(x): E^P\mapsto E^P$ (resp. $S(x,y):E^P\times E^P\mapsto E^P$) the function with component $\alpha$ equal to $f_{\alpha}(x^{\alpha})$ (resp. $S_{\alpha}(x^{\alpha}, y^{\alpha})$). We make the following assumptions:
\renewcommand{\theenumi}{(H\arabic{enumi})}
\begin{enumerate}
	\item $f$ is $K$-Lipschitz continuous
	\item $f$ satisfies the linear growth condition: $f(x) \leq C (1+\vert x \vert^2)$. 
	\item $S$ is $L$-Lipschitz continuous in both variables
	\item $S$ is bounded with supremum denoted $\Vert S \Vert_{\infty}$. 
\end{enumerate}
\begin{remarks}
	\begin{itemize}
		\item Rigorously, in the Fitzhugh-Nagumo or Hodgkin-Huxley system, $f$ is not globally Lipschitz continuous and does not satisfies a linear growth condition. However, these functions are locally Lipschitz continuous, and satisfy the estimate $x^T f(x)\leq K(1+\vert x\vert^2)$. These two assumptions allow to demonstrate the same kind of properties, by using It\^o formula and truncation method. This was done in~\cite{touboulNeuralfields:11} in a more general context, and the interested reader may readily apply this generalization to our system. 
		\item In these assumptions, we consider without loss of generality the same constants $K$ for the sake of simplicity. 
		\item All these assumptions are obviously satisfied for the Wilson-Cowan system. 
	\end{itemize}
\end{remarks}

We will show that in the mean-field limit, the propagation of chaos applies and that the state of all neurons in that limit satisfy the implicit equation:
\begin{equation}\label{eq:MFESigmoid}
	\der{X^{\alpha}_t}{t}=f_{\alpha}(X^{\alpha}_t) + S_{\alpha}\left ( X^{\alpha}_t,\sum_{\beta=1}^P J_{\alpha\beta} \Exp{X^{\beta}_t}  + I^{\alpha}_t+\xi^{\alpha}_t\right).
\end{equation}
which can be written in vector form:
\begin{equation}\label{eq:MFESigmoidVector}
	\der{X_t}{t}=f(X_t) + S\left ( X_t,J\cdot \Exp{X_t}  + I_t+\xi_t\right)
\end{equation}
where $J=(J_{\alpha\beta})_{\alpha,\beta\in\{1\cdots P\}}\in \R^{P\times P}$ and $I_t=(I^{\alpha}_t)_{\alpha=1\cdots P}$, $\xi_t=(\xi^{\alpha}_t,\alpha\in\{1\cdots P\})$.  

This equation is not a usual stochastic differential equation in that in involves the expectation of the solution. It is rather an implicit equation on the space of probability distributions. The first question that may arise in this type of unusual stochastic equation is its the well-posedness. This is subject to the following:
\begin{proposition}\label{pro:ExistenceUniqueness}
	There exists a unique, square integrable, strong solution to the equations~\eqref{eq:MFESigmoid} starting from a square integrable initial condition.
\end{proposition}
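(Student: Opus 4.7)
The expectation $m_\beta(t) := \Exp{X_t^\beta}$ is a deterministic quantity, so the natural strategy is a Picard/fixed-point argument on the space of candidate mean trajectories. Fix $T>0$ and let $\mathcal{E}_T = C([0,T], \R^P)$ endowed with the weighted sup-norm $\|m\|_\lambda = \sup_{t\le T} e^{-\lambda t}|m(t)|$. Define the map $\Phi: \mathcal{E}_T \to \mathcal{E}_T$ by $\Phi(m)(t) := \Exp{X_t^m}$, where $X^m$ is the strong solution of the equation obtained from \eqref{eq:MFESigmoid} by substituting the prescribed $m_\beta(t)$ for $\Exp{X_t^\beta}$. Showing that $\Phi$ is a strict contraction on $(\mathcal{E}_T, \|\cdot\|_\lambda)$ for $\lambda$ large enough will produce a unique fixed point $m^\star$, and $X^{m^\star}$ will be the desired unique square-integrable strong solution.

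\textbf{Key steps.} \emph{Step 1 (auxiliary equation).} With $m$ fixed the drift in $X$ is globally Lipschitz by (H1) and (H3) (the dependence on the OU input $\xi^\alpha$ enters only through the Lipschitz map $S_\alpha$), so a pathwise Cauchy--Lipschitz argument yields a unique strong solution $X^m$ adapted to the natural filtration of $\xi$. Combining the linear growth condition (H2) with the uniform bound $\Vert S\Vert_{\infty}$ from (H4), a Gronwall estimate gives
\begin{equation*}
\Exp{\sup_{t\le T} |X_t^m|^2} \le C_T\bigl(1 + \Exp{|X_0|^2}\bigr),
\end{equation*}
so $\Phi(m)$ is well-defined and continuous in $t$. \emph{Step 2 (contraction).} Couple $X^1 = X^{m^1}$ and $X^2 = X^{m^2}$ using the same initial condition and the same realization of $\xi$. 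Applying (H1) and (H3) componentwise,
\begin{equation*}
|X_t^{1,\alpha}-X_t^{2,\alpha}| \le \frac{K+L}{\tau_\alpha}\int_0^t |X_s^{1,\alpha}-X_s^{2,\alpha}|\,ds + \frac{L}{\tau_\alpha}\sum_\beta |J_{\alpha\beta}|\int_0^t |m^1_\beta(s)-m^2_\beta(s)|\,ds.
\end{equation*}
A pathwise Gronwall inequality followed by Jensen yields $|\Phi(m^1)(t)-\Phi(m^2)(t)| \le C \int_0^t |m^1(s)-m^2(s)|\,ds$; multiplying by $e^{-\lambda t}$ gives $\|\Phi(m^1)-\Phi(m^2)\|_\lambda \le (C/\lambda)\|m^1-m^2\|_\lambda$, a contraction for $\lambda$ sufficiently large. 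Banach's theorem then provides the unique fixed point $m^\star$.

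\textbf{Main obstacle.} The genuinely new feature compared to standard linear McKean--Vlasov arguments is that the coupling enters through a nonlinear sigmoid $S$ rather than linearly, but Lipschitz continuity (H3) makes this essentially transparent. The subtlest technical point is measurability: one must verify that the pathwise construction in Step 1 yields a process jointly measurable in $(\omega, t)$ and adapted to the filtration generated by $\xi$, so that $\Exp{X_t^m}$ is well-defined and that $m \mapsto \Phi(m)$ maps $\mathcal{E}_T$ into itself continuously; this relies on applying the Gronwall bound uniformly in $\omega$ together with Fubini. Uniqueness among square-integrable strong solutions of \eqref{eq:MFESigmoid} follows at once by applying the Step 2 estimate to any two such solutions, both of which must be fixed points of $\Phi$.
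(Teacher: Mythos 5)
Your proof is correct, but it takes a genuinely different route from the paper's. The paper runs the Picard iteration directly on the space of square-integrable processes $\mathcal{M}^2([0,T],E^P)$: it defines $\Phi$ acting on processes by $\Phi(X)_t = X_0+\int_0^t (f(X_s)+S(X_s,J\cdot\Exp{X_s}+I_s+\xi_s))\,ds$, establishes the factorial bound $D^n_t\leq (K')^nT^n D^0_T/n!$ for successive iterates, and concludes by the standard Bienaym\'e--Chebyshev/Borel--Cantelli argument. You instead exploit the fact that the law-dependence enters only through the finite-dimensional statistic $\Exp{X_t}$: you freeze a candidate mean $m\in C([0,T],\R^P)$, solve the resulting random ODE pathwise by Cauchy--Lipschitz (legitimate here since there is no It\^o integral, the noise $\xi$ entering only as a continuous coefficient inside the Lipschitz map $S$), and then contract the deterministic map $m\mapsto\Exp{X^m}$ in a weighted sup-norm. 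What your approach buys is a fixed-point problem posed on a finite-dimensional function space rather than on a space of processes, a particularly clean uniqueness argument (pathwise uniqueness of the frozen equation combined with uniqueness of the fixed-point mean), and a structure that anticipates the paper's subsequent reduction to a closed ODE for the mean; the paper's approach is the standard Sznitman-style machinery, which generalizes more readily to interactions depending on the full law rather than on the expectation alone. Two minor points you should tighten: the continuity of $t\mapsto\Exp{X^m_t}$ follows from your uniform second-moment bound by dominated convergence and deserves a word; and in the final uniqueness step it is the \emph{means} of the two solutions, not the solutions themselves, that are fixed points of $\Phi$ --- one then needs the pathwise uniqueness of the frozen equation to conclude that the processes coincide, which is exactly what your Step 1 provides.
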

\begin{proof}
	The existence and uniqueness of solutions is demonstrated using a routine contraction argument. Let us fix a square integrable initial condition random variable $X_0\in E^{P}$ and define the map $\Phi$ acting on the space of square integrable stochastic processes $\M^2([0,T],E^{ P})$:
	\[\Phi: X \mapsto \bigg(X_0+\int_0^t \Big(f(X_s) + S\big ( X_s, J\cdot \Exp{X_s}  + I_s+\xi_s \big)\Big)\,ds \bigg)_{t \in [0,T]}.\]
	Fixed points of $\Phi$ are exactly the solutions of the mean-field equations. Moreover, for $X$ a square integrable, we have $\Phi(X)\in \M^2([0,T],E^P)$. 
	Indeed, thanks to Cauchy-Schwarz inequality, we have:
	\begin{align*}
		\Exp{\sup_{t\in[0,T]} \vert \Phi(X)_t\vert^2} &\leq 4 \bigg\{ \Exp{\vert X_0\vert^2}+ T \int_0^T C^2 (1+\Exp{\vert X_s\vert^2}) + \Vert S \Vert_{\infty}^2 \,ds \bigg\}<\infty.
	\end{align*}
	
	Fixing a square integrable process $ X^0 \in \M^2([0,T],E^{P})$, we define the sequence of square integrable processes $X^{n+1}=\Phi(X^n)$. We note $D^n_t=\Exp{\sup_{t\in[0,T]} \vert X^{n+1}_t-X^n_t\vert^2}$, $\Vert{J}\Vert$ the operator norm of $J$ and $Z^k_t=J\cdot \Exp{X_s}  + I_s+\xi_s$. We have:
	\begin{align*}
		D^{n}_t & \leq 3\,T \;\mathbbm{E}\Bigg\{\sup_{s\in [0,t]} \int_0^s \vert f(X^{n}_u)-f(X^{n-1}_u)\vert^2 + \vert S(X^{n}_u,Z^{n}_u)-S(X^{n-1}_u,Z^n_u)\vert^2\\
		& \quad \qquad  +\vert S(X^{n-1}_u,Z^{n}_u)-S(X^{n-1}_u,Z^{n-1}_u)\vert^2\,du \Bigg\}\\
		&\leq 3\,T \;\mathbbm{E}\Bigg\{\sup_{s\in [0,t]} \int_0^s (K^2+L^2) \vert X^{n}_u- X^{{n-1}}_u\vert^2 + L^2 \vert J\cdot \Exp{X^{{n}}_u - X^{{n-1}}_u}\vert^2 \,du \Bigg\}\\
		&\leq 3\,T \;\mathbbm{E}\Bigg\{\sup_{s\in [0,t]} \int_0^s (K^2+L^2) \vert X^{n}_u- X^{{n-1}}_u\vert^2 + L^2 \Vert{J}\Vert \;\Exp{\vert X^{{n}}_u - X^{{n-1}}_u\vert^2} \,du \Bigg\}\\
		&\leq 3\,T (K^2+L^2 (1+ \Vert{J}\Vert))\int_0^t D^{n-1}_s\,ds
	\end{align*}
	We hence obtain by immediate recursion:
	\[D^n_t \leq \frac{(K')^n T^n}{n!} D_T^0\]
	with $ K'=3\,T (K^2+L^2 (1+ \Vert{J}\Vert))$. $D_T^0$ is a finite quantity because of the boundedness of $\Exp{\vert \Phi(X^0)\vert^2}$. From this inequality, routine methods using the Bienaym\'e-Chebyshev inequality allow concluding on the existence and uniqueness of solutions of the mean-field equation (see e.g.~\cite{karatzas-shreve:87}). 
\end{proof}

The next theorem proves that the unique solution to the mean-field equations will describe exactly the behavior of the network when the network size goes to infinity (i.e. when all $N_{\alpha}\to\infty$), and that all neurons will be independent provided that their initial condition is. This latter property is referred as the \emph{propagation of chaos property}. 

\begin{theorem}\label{thm:MeanFieldSigmoid}
If the initial conditions to the network equations~\eqref{eq:GeneralSigmoidInter} are independent, and identically distributed population per population, then in the mean-field limit, the propagation of chaos applies and the laws of the network solutions converge towards that of the mean-field equations~\eqref{eq:MFESigmoid}, in the sense that for any $i\in\N$ there exists $\bar{X}^i_t$ with law given by the solution of the mean-field equations and such that:
\[\Exp{\sup_{t\in [0,T]}\vert X^i_t-\bar{X}^i_t \vert} \leq \frac{C(T)}{\min_{\alpha}\sqrt{N_{\alpha}}}\]
\end{theorem}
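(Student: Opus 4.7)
The plan is to proceed by a synchronous coupling argument of McKean–Vlasov type. For each $i$ with $p(i)=\alpha$, introduce a process $\bar{X}^i_t$ solving the mean-field equation~\eqref{eq:MFESigmoid} driven by the \emph{same} realization of the noise $\xi^i_t$ and starting from the \emph{same} initial condition $X^i_0$ as the network particle indexed by $i$. By Proposition~\ref{pro:ExistenceUniqueness} these processes exist and are square integrable; and since the initial conditions and noises are i.i.d.\ within each population, the family $\{\bar{X}^j:p(j)=\beta\}$ consists of i.i.d.\ copies of the mean-field law for population $\beta$, so that $\mathbb{E}[\bar{X}^j_t]$ equals the common expectation $\mathbb{E}[\bar{X}^\beta_t]$ appearing in~\eqref{eq:MFESigmoid}.

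Subtracting the integral forms of~\eqref{eq:GeneralSigmoidInter} and~\eqref{eq:MFESigmoid} and using (H1) and (H3) (absorbing the $\tau_\alpha$ into constants), I get
\[ |X^i_t-\bar{X}^i_t| \;\leq\; (K+L)\int_0^t |X^i_s-\bar{X}^i_s|\,ds + L\int_0^t |R^i_s|\,ds, \]
where $R^i_s := \sum_\beta J_{\alpha\beta}\bigl(\tfrac{1}{N_\beta}\sum_{p(j)=\beta} X^j_s - \mathbb{E}[\bar{X}^\beta_s]\bigr)$. Since the right-hand side is non-decreasing in $t$, I replace the left side by $\sup_{u\le t}|X^i_u-\bar{X}^i_u|$ before taking expectations. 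The key step is then to split, for each $\beta$,
\[ \frac{1}{N_\beta}\!\!\sum_{p(j)=\beta}\! X^j_s - \Exp{\bar{X}^\beta_s} \;=\; \frac{1}{N_\beta}\!\!\sum_{p(j)=\beta}\!(X^j_s-\bar{X}^j_s) \;+\; \Big(\frac{1}{N_\beta}\!\!\sum_{p(j)=\beta}\!\bar{X}^j_s - \Exp{\bar{X}^\beta_s}\Big). \]
By the symmetric construction of the coupling, $\mathbb{E}|X^j_s-\bar{X}^j_s|$ depends only on $\beta$, so the first piece will feed back into a Gronwall-type recursion. The second piece is the empirical mean of $N_\beta$ i.i.d., centered, square-integrable random variables; its $L^2$ norm therefore scales as $N_\beta^{-1/2}$, and a Cauchy–Schwarz step yields an $L^1$ bound by $C(T)/\sqrt{N_\beta}$, uniformly in $s\in[0,T]$.

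Setting $\eta_\alpha(t):=\mathbb{E}[\sup_{u\le t}|X^i_u-\bar{X}^i_u|]$ (well-defined independently of the choice of $i$ with $p(i)=\alpha$) and $\eta(t):=\max_\alpha\eta_\alpha(t)$, the two estimates combine into
\[ \eta(t) \;\leq\; \bigl(K+L+L\,\|J\|_{\infty}\bigr)\int_0^t\eta(s)\,ds \;+\; \frac{C'(T)}{\min_\beta\sqrt{N_\beta}}, \]
with $\|J\|_{\infty}:=\max_\alpha\sum_\beta|J_{\alpha\beta}|$; Gronwall's lemma then closes the estimate and yields the announced bound, with propagation of chaos following immediately from the independence of the $\bar{X}^i$. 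I expect the main delicate point to be the justification of exchangeability within each population — needed so that $\mathbb{E}|X^j_s-\bar{X}^j_s|$ genuinely depends only on $p(j)$ — which follows from the i.i.d.\ structure of the noises and initial conditions together with the symmetric pairing in the coupling; once this is in place, the argument is a standard Sznitman-type reduction, with the new ingredient being that the nonlinear sigmoidal coupling is absorbed through the Lipschitz assumption~(H3) rather than the usual linear mean-field coupling.
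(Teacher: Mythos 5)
Your proposal is correct and follows essentially the same route as the paper's proof: the same Sznitman-type synchronous coupling with shared noise and initial data, the same two-part decomposition of the mean-field discrepancy into a coupling error plus an i.i.d.\ fluctuation term controlled at rate $N_\beta^{-1/2}$ via Cauchy--Schwarz, and the same Gronwall closure. The only differences are cosmetic (you aggregate the population-wise errors with a $\max$ and a row-sum norm of $J$ where the paper uses a sum and the maximal entry $\tilde J$), and the exchangeability point you flag is handled in the paper exactly as you suggest.
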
	
\begin{proof}
The proof of the theorem uses the usual coupling argument popularized by Sznitman~\cite{sznitman:89} consisting in identifying an almost sure limit for the network equations. In details, considering neuron $i$ of population $\alpha$, we define the coupled process $\bar{X}^i_t$ solution of equation~\eqref{eq:MFESigmoid} with initial condition $X^i_0$ identical to the initial condition of neuron $i$ in the finite network and driven by the same noise $\xi^i_t$:
	\[\begin{cases}
		\der{\bar{X}^i_t}{t} &= f_{\alpha}(\bar{X}^i_t) + S_{\alpha}\left ( \bar{X}^i_t,\sum_{\beta=1}^P J_{\alpha\beta} \Exp{\bar{X}^{\beta}_t}  + I^{\alpha}_t+\xi^i_t\right)\\
		\bar{X}^i_0 &= X_0^i
	\end{cases}\]
	We denote by $\tilde{J}$ the maximal value of the coupling strengths $J_{\alpha\beta}$. We aim at showing that $X^{i}$ almost surely converges towards $\bar{X}^i$, and to this purpose analyze the distance between the two processes, $D^{\alpha}_t=\Exp{\sup_{t\in [0,T]} \vert X^{i,N}_t-\bar{X}^i_t\vert}$. Because of interchangeability in the network, that quantity does not depend on the specific neuron $i$ we consider but only on the neuronal population $\alpha$ it belongs to. For the sake of simplicity, we note:
	\[\begin{cases}
		\bar{Z}^{\alpha}_t &=\sum_{\beta=1}^P J_{\alpha\beta} \Exp{\bar{X}^{\beta}_t}  + I^{\alpha}_t+\xi^i_t\\
		\bar{Z}^{N,\alpha}_t &=\sum_{\beta=1}^P \frac{J_{\alpha\beta}}{N_{\beta}} \sum_{p(j)=\beta} \bar{X}^{j}_t  + I^{\alpha}_t+\xi^i_t\\
		Z^{N,\alpha}_t &=\sum_{\beta=1}^P \frac{J_{\alpha\beta}}{N_{\beta}} \sum_{p(j)=\beta} \Exp{X^{j}_t}  + I^{\alpha}_t+\xi^i_t.
	\end{cases}
	\]
	 In order to analyze this distance, we decompose it into the sum of four simpler terms that are more easily controllable:
	\begin{align*}
		\vert X^i_t-\bar{X}^i_t\vert &\leq \int_0^t \vert f_{\alpha}(X^i_s)-f_{\alpha}(\bar{X}^i_s)\vert + \vert S(X^i_s,Z^{\alpha,N}_s)-S(\bar{X}^i_s,Z^{\alpha,N}_s)\vert \\
		& \quad + \vert S(\bar{X}^i_s,Z^{\alpha,N}_s)-S(\bar{X}^i_s,\bar{Z}^{\alpha,N}_s)\vert 
		+\vert S(X^i_s,\bar{Z}^{\alpha,N}_s)-S(\bar{X}^i_s,\bar{Z}^{\alpha}_s)\vert \; ds\\
		& \leq (K+L)\int_0^t \vert X^i_s-\bar{X}^i_s\vert + L\tilde{J} \sum_{\beta=1}^P \frac 1 {N_{\beta}} \left(\sum_{p(j)=\beta} \vert X^j_t-\bar{X}^j_t\vert +  \vert \sum_{p(j)=\beta} \bar{X}^j_t - \Exp{\bar{X}^{j}_t}\vert\right)\,ds
	\end{align*}
	readily yielding:
	\begin{equation*}
		D^{\alpha}_t \leq \int_0^t (K+L) D^{\alpha}_s + L \tilde{J} \sum_{\beta=1}^P D^{\beta}_s +  L\tilde{J} \sum_{\beta=1}^P    \frac 1 {N_{\beta}} \Exp{ \left \vert \sum_{p(j)=\beta} \bar{X}^j_s - \Exp{\bar{X}^j_s} \right\vert }  \; ds
	\end{equation*}
	and hence for $D_t=\sum_{\alpha=1}^P D^{\alpha}_t$ we obtain:
	\begin{equation}\label{eq:DtBound}
		D_t \leq \int_0^t (K+L+ P\,L \tilde{J}) D_s
		 + P\,L \tilde{J}\; \sum_{\beta=1}^P  \frac 1 {N_{\beta}} \Exp{ \left \vert \sum_{p(j)=\beta} \bar{X}^j_u - \Exp{\bar{X}^j_u}  \right\vert} \; ds
	\end{equation}	
	Let us now evaluate the second term of the righthand side. We have, using Cauchy-Schwarz inequality:
	\begin{align*}
		\Exp{\left \vert \sum_{j\in\beta} \bar{X}^j_s - \Exp{\bar{X}^j_s} \right\vert} & \leq \Exp{\left\vert \sum_{j\in\beta} \bar{X}^j_u - \Exp{\bar{X}^j_u} \right\vert^2}^{1/2}
	\end{align*}
	The processes $(\bar{X}^j_s-\Exp{\bar{X}^j_s})$ are centered and independent since they are driven by independent processes $W^j_t$ and $\xi^j_t$. Developing the square of the sum, we are hence left with less that $N_{\beta}$ non-zero terms which are equal the variance of $\bar{X}^{\beta}_t$, namely $\Exp {(\bar{X}^j_s-\Exp{\bar{X}^j_s})^2}$ which is a bounded quantity as a result of proposition~\ref{pro:ExistenceUniqueness}. Let us denote by $C$ this bound.
	Returning to equation~\eqref{eq:DtBound}, we obtain:
	\[D_t \leq \int_0^t (K+L + P\,L \tilde{J}) D_s\,ds
	 + T\,P^2\,L \tilde{J} \sqrt{\frac{C}{\min_{\beta}N_{\beta}}}\]
	and Gronwall's lemma implies that:
	\[D_t \leq \frac{K_1}{\min_{\beta}\sqrt{N_{\beta}}} e^{K_2\,t}\]
	with $K_2=(K+L + P\,L \tilde{J})$ and $K_1=P\,L \tilde{J} \sqrt{C} T$. This quantity tends to zero when all $N_{\alpha}$ tend to zero (i.e. when $\min_{\beta} N_{\beta}\to\infty$), which implies the almost sure convergence $(X^i_t)$ towards $(\bar{X}^i_t)$, and hence convergence in law of $(X^i_t)$ towards the solution of the mean-field equations. Moreover, considering a finite number $l$ of neurons in the network $(i_1,\cdots,i_l)$, it is easy to see using the above result that $(X^{i_1}_t,\cdots, X^{i_l}_t)$ converge almost surely towards $(\bar{X}^{i_1}_t,\cdots, \bar{X}^{i_l}_t)$ which is a set of independent processes. We therefore have propagation of chaos, i.e. that in the limit where all $N_{\beta}$ tend to infinity, the state of these neurons are independent for all times. 
\end{proof}

\begin{remark}
	Note that this proof has been performed in the absence of Brownian additive noise in contrast with usual approaches. Yet, the noisy input received by each cell allows to recover the same kind of properties. We also emphasize the fact that the proof provided here readily extends to networks with additive noise. 
\end{remark}

As announced, the results of proposition~\ref{pro:ExistenceUniqueness} and theorem~\ref{thm:MeanFieldSigmoid} readily apply to the Wilson-Cowan model, as we now show: 
\begin{corollary}\label{cor-finred}
	In the mean-field limit where all population sizes $N_{\alpha}$ tend to infinity, the Wilson-Cowan system~\eqref{eq:WCNet} in the mean-field limit satisfies the propagation of chaos property and the law of neurons of population $\alpha$ asymptotically satisfy the mean-field equations:
	\begin{equation}\label{eq:WCMFE}
		\tau_{\alpha}\frac{dX^{\alpha}_t}{dt} = -X^{\alpha}_t + S \Big(\sum_{\beta=1}^P J_{\alpha\beta} \mu_{\beta}(t) + I^{\alpha}_t + \xi^{\alpha}_t\Big)
	\end{equation}
	where we denoted $\mu_{\beta}(t)=\Exp{X^{\beta}_t}$. These quantities $(\mu_{\beta}(t))_{\beta=1\cdots P}$ satisfy a closed equation:
	\begin{equation}\label{eq:meangeneral}
		\tau_{\alpha}\dot{\mu_{\alpha}} = -\mu_{\alpha}(t) + G_{\alpha}\Big(\sum_{\beta=1}^P J_{\alpha\beta} \mu_{\beta}(t) + I^{\alpha}_t\Big)
	\end{equation}
	where 
	\begin{equation}\label{eq-defG}
		G_{\alpha}:=x\mapsto \Exp{S_{\alpha}(x+\xi^{\alpha}_t)}.
	\end{equation}
	In the case of slow modulation currents~\eqref{eq:SlowModulation}, the mean-field equations read:
	\begin{equation}\label{eq:SlowModulationMFE}
		\begin{cases}
			\tau_{\alpha}\der{X^{\alpha}_t}{t}&=-X^{\alpha}_t +S(\sum_{\beta=1}^P J_{\alpha\beta} \mu_{\alpha}(t) +\lambda_{\alpha} U_t+I^{\alpha}_t+\xi^{\alpha}_t)\\
			\der{U_t}{t}&= \varepsilon_{\alpha}(k+\gamma U_t-\sum_{\beta=1}^P\mu_{\beta}(t))
		\end{cases}
	\end{equation}
	and in that case, $U_t$ is a deterministic variable satisfying an ODE and again, the expectation $\mu_{\alpha}(t)$ of the process $X^{\alpha}_t$ satisfies a closed set of ordinary differential equations:
	\begin{equation}\label{eq:SlowModulationMFEMoments}
		\begin{cases}
			\tau_{\alpha}\der{\mu_{\alpha}(t)}{t}&=-\mu_{\alpha}(t) +G_{\alpha}(\sum_{\beta=1}^P \mu_{\beta}(t)+ \lambda_{\alpha} U_t+I^{\alpha}_t)\\
			\der{U_t}{t}&= \varepsilon_{\alpha}(k+\gamma U_t-\sum_{\beta=1}^P\mu_{\beta}(t)).
		\end{cases}
	\end{equation}
\end{corollary}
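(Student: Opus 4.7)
The plan is to view both~\eqref{eq:WCNet} and the slow-modulation extension~\eqref{eq:SlowModulation} as special cases of the general network model~\eqref{eq:GeneralSigmoidInter}, apply Proposition~\ref{pro:ExistenceUniqueness} and Theorem~\ref{thm:MeanFieldSigmoid}, and then exploit the very particular structure of the Wilson--Cowan coupling, namely that the sigmoid $S_\alpha$ does not involve the state variable, to close the dynamics at the level of first moments.

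For the Wilson--Cowan case, I would first verify hypotheses (H1)--(H4): here $f_\alpha(x)=-x$ is linear, hence Lipschitz continuous with linear growth, and the sigmoid $S_\alpha$ is bounded and Lipschitz; since $S_\alpha$ does not depend on its first argument, the $x$-Lipschitz part of (H3) is trivial. Proposition~\ref{pro:ExistenceUniqueness} then yields a unique square-integrable strong solution to the implicit equation~\eqref{eq:WCMFE}, and Theorem~\ref{thm:MeanFieldSigmoid} provides propagation of chaos with rate $1/\sqrt{\min_\alpha N_\alpha}$.

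The derivation of the closed equation~\eqref{eq:meangeneral} is the key step, but it follows from a single structural observation once expectations are taken in~\eqref{eq:WCMFE}: inside the sigmoid only the additive noise $\xi^\alpha_t$ is random, because $\mu_\beta(t):=\Exp{X^\beta_t}$, the matrix $J_{\alpha\beta}$ and $I^\alpha_t$ are deterministic. Fubini's theorem (applicable because $S_\alpha$ is bounded, which also legitimises swapping $\frac{\mathrm{d}}{\mathrm{d}t}$ with $\Exp{\cdot}$) then identifies $\Exp{S_\alpha\bigl(\sum_\beta J_{\alpha\beta}\mu_\beta(t)+I^\alpha_t+\xi^\alpha_t\bigr)}$ with $G_\alpha$ evaluated at the deterministic argument, as defined in~\eqref{eq-defG}. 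No moment closure hypothesis is needed: exact closure comes for free from the state-independence of the coupling.

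For the slow-modulation system, I would run the same coupling argument on the joint state $(X^i_t,U^i_t)\in\R^2$. A minor extension of the framework is required, since the $U$-equation is linear (hence unbounded in $U$ and in the empirical $X$-mean) and carries no Brownian driver; the remark following Theorem~\ref{thm:MeanFieldSigmoid} together with the locally-Lipschitz truncation argument of~\cite{touboulNeuralfields:11} takes care of this. I expect the main obstacle to be the last claim, that the limiting $U_t$ is actually deterministic, even though every finite-network $U^i_t$ depends on the stochastic empirical average $\frac1{N_\beta}\sum_{p(j)=\beta}X^j_t$. The argument I would use is that in the limit the forcing $k-\sum_\beta\mu_\beta(t)$ is deterministic and the equation has no stochastic driver, so pathwise uniqueness with a deterministic initial datum $U^i_0$ yields a deterministic trajectory; equivalently, the variance of $U^i_t$ is controlled by the $1/\sqrt{\min_\beta N_\beta}$ rate of Theorem~\ref{thm:MeanFieldSigmoid} and vanishes in the limit. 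Taking expectations in the $X$-equation as before then produces~\eqref{eq:SlowModulationMFEMoments}.
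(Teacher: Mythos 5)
Your proposal is correct and follows essentially the same route as the paper, whose proof simply observes that both cases fit the general framework of Section~\ref{sec:MFLimits} (so Proposition~\ref{pro:ExistenceUniqueness} and Theorem~\ref{thm:MeanFieldSigmoid} apply) and that the averaged equations follow by taking expectations. You in fact supply details the paper omits: the verification of (H1)--(H4), the Fubini argument identifying the deterministic-argument expectation with $G_\alpha$, and the observation that the linear, unbounded coupling in the $U$-equation requires a minor extension beyond the boundedness hypothesis.
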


\begin{proof}
Both cases are can be readily set in the general framework of the current section, hence proposition~\ref{pro:ExistenceUniqueness} and theorem~\ref{thm:MeanFieldSigmoid} apply, ensuring propagation of chaos and yielding the announced mean field limit. The averaged equation is trivial to derive taking the expectation of the process.
\end{proof}

We therefore proved that the systems under consideration here enjoy a considerable dimension reduction: the original system is a very large network of nonlinearly interacting stochastic processes with random currents, in which the mean is not solution of an ordinary differential equation, in contrast to the mean-field equations describing the infinite-size limit which gives rise to a closed-form ODE for the mean. 

The global dynamics of the system can hence be very precisely understood by analyzing the dynamics of the obtained system of differential equations. These involve an unspecified function, $G_{\alpha}$, which is the expectation of our sigmoidal transform $S_{\alpha}(x+\xi^{\alpha}_t)$ with respect to $\xi^{\alpha}$. In detail, if $\xi^{\alpha}_t$ has a density $p^{\alpha}_t$, this function writes:
\[G_{\alpha}(x)=\int_{\R} S(x+y)dp^{\alpha}_t(y),\]
coupling the dynamics of the mean equation to the distribution of the noisy current received by each cell. In what follows, we will consider that the noisy current is a stationary Ornstein-Uhlenbeck process, i.e. are centered, Gaussian processes with constant standard deviation denoted $\sigma_{\alpha}$. In the case where $S_{\alpha}(x)=\erf(g_{\alpha}\,x)$, a simple change of variables and integration by parts (see e.g.~\cite{touboul-hermann:11}) gives the function $G_{\alpha}$ in closed form: 
\begin{equation}\label{eq:EffectiveSigmoid}
	G_{\alpha}(x)=\erf(\frac{g_{\alpha}x}{\sqrt{1+g_{\alpha}^2\,\sigma_{\alpha}^2}}).
\end{equation}
The noise level $\sigma_{\alpha}$ is hence a parameter of the vector field, and may lead to \emph{noise-driven bifurcations}. 

From these fully determined systems, we are now in a position to use classical tools from the analysis of slow-fast systems to uncover the presence of canards and mixed-mode oscillations in macroscopic activity in relationship to the noise level $\sigma_{\alpha}$.

\section{Canards and MMO in the mean-field equations}\label{sec:Bifurcations}
We now analyze the limit systems~\eqref{eq:meangeneral} and~\eqref{eq:SlowModulationMFEMoments} with a particular focus on the effects of noise on the dynamics. We start by showing the role of noise in the emergence of canard explosion, prior to extend the model to slow modulation currents systems where the additional variable gives rise to noise-induced MMOs. 
\subsection{A two-populations Wilson-Cowan system: noise-induced canard explosion}\label{wc2d}
In this section, we study the mean-field equations~\eqref{eq:meangeneral} arising from Wilson-Cowan system with two populations. The network equations are given by~\eqref{eq:WCNet}, with $P=2$ populations evolving on vastly different timescales $\tau_1=\eps$ and $\tau_2=1$, $I^1=ze$, $I^2=0$. Moreover, in order to reduce the system to closed-form mean equations, we choose $S_{\alpha}$ to be $\erf$ functions and $\xi^{\alpha}$ to be a stationary Ornstein-Uhlenbeck process with standard deviation $\sigma_{\alpha}$. As a result of the previous section, we know that the associated mean-field equations~\eqref{eq:meangeneral} have solutions whose mean satisfy a closed set of ODEs given by the equations:
\begin{equation}\label{eq:WC2d}
\begin{cases}
	\eps\dot{u_1} &= -u_1 + G_1(J_{11}u_1+J_{12}u_2+z_e)\\
	   ~\dot{u_2} &= -u_2 + G_2(J_{21}u_1+J_{22}u_2)
\end{cases}
\end{equation}
In these equations, the two different effective nonlinearities $G_1$ and $G_2$ are of the form~\eqref{eq:EffectiveSigmoid} and therefore depend on the noise intensities $\sigma_{\alpha}$.

Let us now analyze the dynamics of these equations. We start by analyzing the behavior of the solutions upon variation of the parameter $z_e$, corresponding to fixed applied current to the first population. The behavior of the system as a function of current levels $z_e$ is particularly important in our context for its relationship with the slow modulation currents as will be made explicit in the following section. Moreover, this parameter plays the important role of moving the system from the excitable to the spiking regime. It is well known that such a parameter transition, under some degeneracy assumptions, can lead to a canard explosion, i.e. a transition from small amplitude oscillations to relaxation oscillations, see for example~\cite{mk-ps_01b}. And in the present system, canard explosions can be found by varying the parameter $z_e$ are observed (see bifurcation diagram generated with AUTO in Fig.~\ref{wc2dze}). The diagrams suggest that  the shown canard explosions are non-degenerate (see~\cite{mk-ps_01b} for a list of non-degeneracy conditions). It would be straightforward but tedious to verify these conditions and we omit such computations in this paper. This explosion informs us of the presence of canard explosions, upon variation of the input, in the Wilson-Cowan system~\eqref{eq:WC2d}, in the presence of noise. 

\begin{figure}[!t]
	\centering
        \subfigure[Bifurcation Diagram in $z_e$]{\includegraphics[width=.7\textwidth]{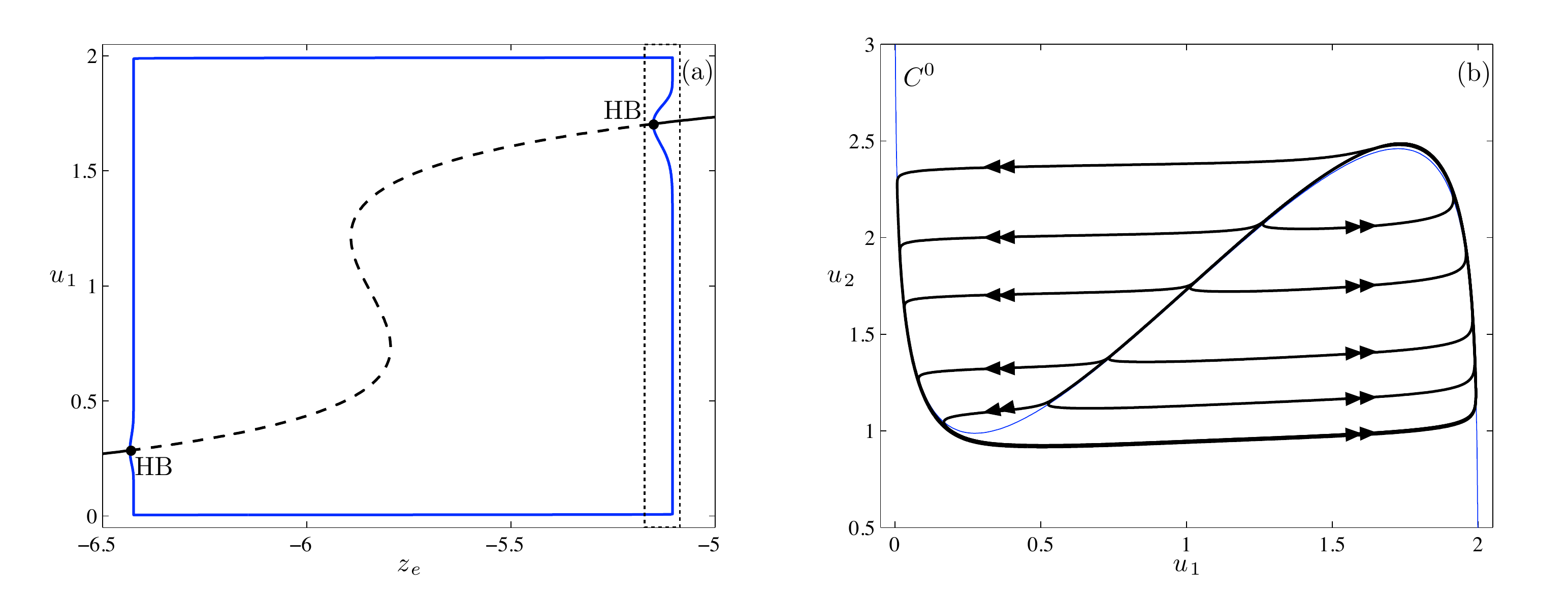}\label{wc2dze}}\\
        \subfigure[Bifurcation Diagram in $\sigma_1$]{\includegraphics[width=.7\textwidth]{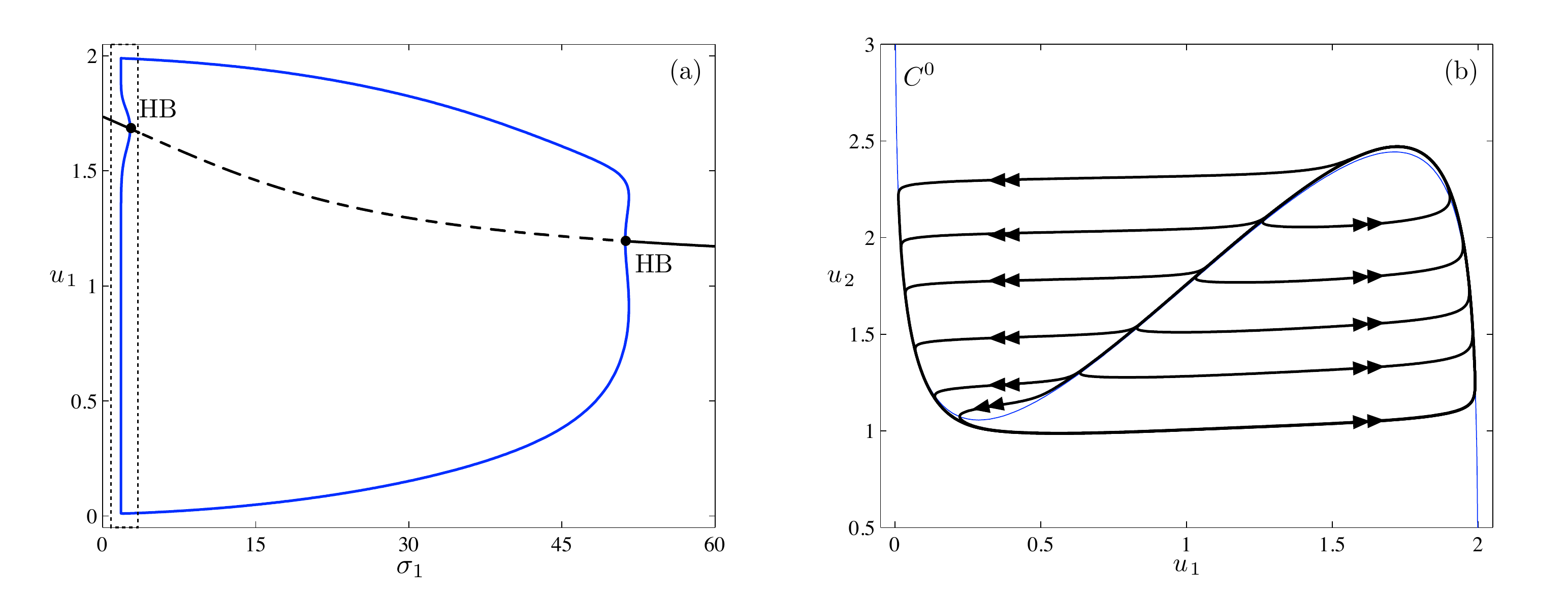}\label{wc2dsigma}}\\
	\caption{Canard explosion in the 2D WC model, for a variation of parameter $z_e$ (top panels) and $v_1$ (bottom panels). Shown are the bifurcation branch with the two explosions (left panels), cycles corresponding to the canard explosion highlighted by a dashed box on the bifurcation diagram (right panels).}
	\label{fig:wc2d}
\end{figure}

The question that arises is how this explosion depends on the amplitude of the noise neurons are submitted to. This can be addressed here using usual bifurcation and singular perturbation methods since the noise appears as a parameter of the ODEs, acting on the maximal slope of the sigmoids. Numerical simulation of the bifurcation diagram shows indeed that the presence of the canard explosion depends on the level of noise, and in particular that noise can induce canard explosions. As an example of such {\em noise induced
canard explosion}, we provide in Fig.~\ref{wc2dsigma} the AUTO bifurcation diagram of equations~\eqref{eq:WC2d} upon variation of $\sigma_1$ the noise received by neurons in the fast population. Therefore, noise can also bring the system from the region of excitability to the region of spiking, through  a non-degenerate canard explosion. 

As a side result, this analysis confirms the direct relationship between levels of noise and synchronized oscillations in the system~\cite{touboul-hermann:11,touboulNeuralFieldsDynamics:11}. This is an extremely surprising phenomena for which no microscopic interpretation has been provided thus far, and which in the context of slow-fast Wilson-Cowan system under consideration, appears even more surprising: indeed, the presence of this explosion shows that very small changes in the value of the variance of the noise yield dramatic qualitative switches on the macroscopic dynamics. Moreover, since we have proved that all neurons have the same probability distribution population-wise, the finding shows that the mean presents a sudden switch to a large-amplitude periodic orbit, corresponding to all neurons presenting an extreme change upon very small variation of the noise. More than noise-induced oscillations, the phenomenon shows how sensitive the dependence upon noise levels is in large-scale systems. 

Now that the two-population system has been analyzed as a function of parameters $z_e$ and $\sigma_1$, we proceed by adding to the same system a slow adaptation current. 

\subsection{The slow modulation current Wilson-Cowan system}
\label{wc3d}
We now enrich the network model analyzed in the previous section by considering slow adaptation currents. The underlying microscopic equation correspond to a two-populations Wilson-Cowan system with slow adaptation as introduced in equation~\eqref{eq:SlowModulation}. In order to simplify the model, we fix $\lambda_1=1$, $\lambda_2=0$ and $I^1=I^2=0$. The results of section~\ref{sec:MFLimits} ensure that the propagation of chaos occur and that the network converges towards mean-field equations, given by equation~\eqref{eq:SlowModulationMFE}, and that the mean of the solution to the mean-field equation satisfies an equation of type~\eqref{eq:SlowModulationMFEMoments}, which in our two populations setting with one fast and one slow population and parameters provided in section~\ref{wc2d}, read:
\begin{equation}\label{WC3d}
	\begin{cases}
		\eps\dot{u_1} &= -u_1 + G_1(J_{11}u_1+J_{12}u_2+z_e)\\
		   ~\dot{u_2} &= -u_2 + G_2(J_{21}u_1+J_{22}u_2)\\
		   ~\dot{z_e} &= k +\gamma z_e - u_1 - u_2,
	\end{cases}
\end{equation}
where the slow adaptation current (noted $U$ in the general equation~\eqref{eq:SlowModulationMFEMoments}) is denoted by $z_e$. This notation clearly shows that equation~\eqref{WC3d} can also be seen as generalization of the two population Wilson-Cowan system~\eqref{eq:WC2d} obtained by turning the parameter $z_e$ into a dynamic variable. In these equations, the new parameter
$k$ controls the position of the equilibria of \eqref{WC3d}. One of the most common routes
to MMOs is the passage through a so-called Folded Saddle Node of type II (FSN II, see~\cite{mk_mw_10}). This transition is similar to canard explosion in the sense that it corresponds to a passage from a regime of a stable excitable equilibrium to MMOs or relaxation oscillators. We now investigate the presence of FSN II in our system~\eqref{WC3d} by considering the reduced system, obtained by setting $\eps=0$:
\begin{figure}[!t]
	\centering
		\includegraphics[width=\textwidth]{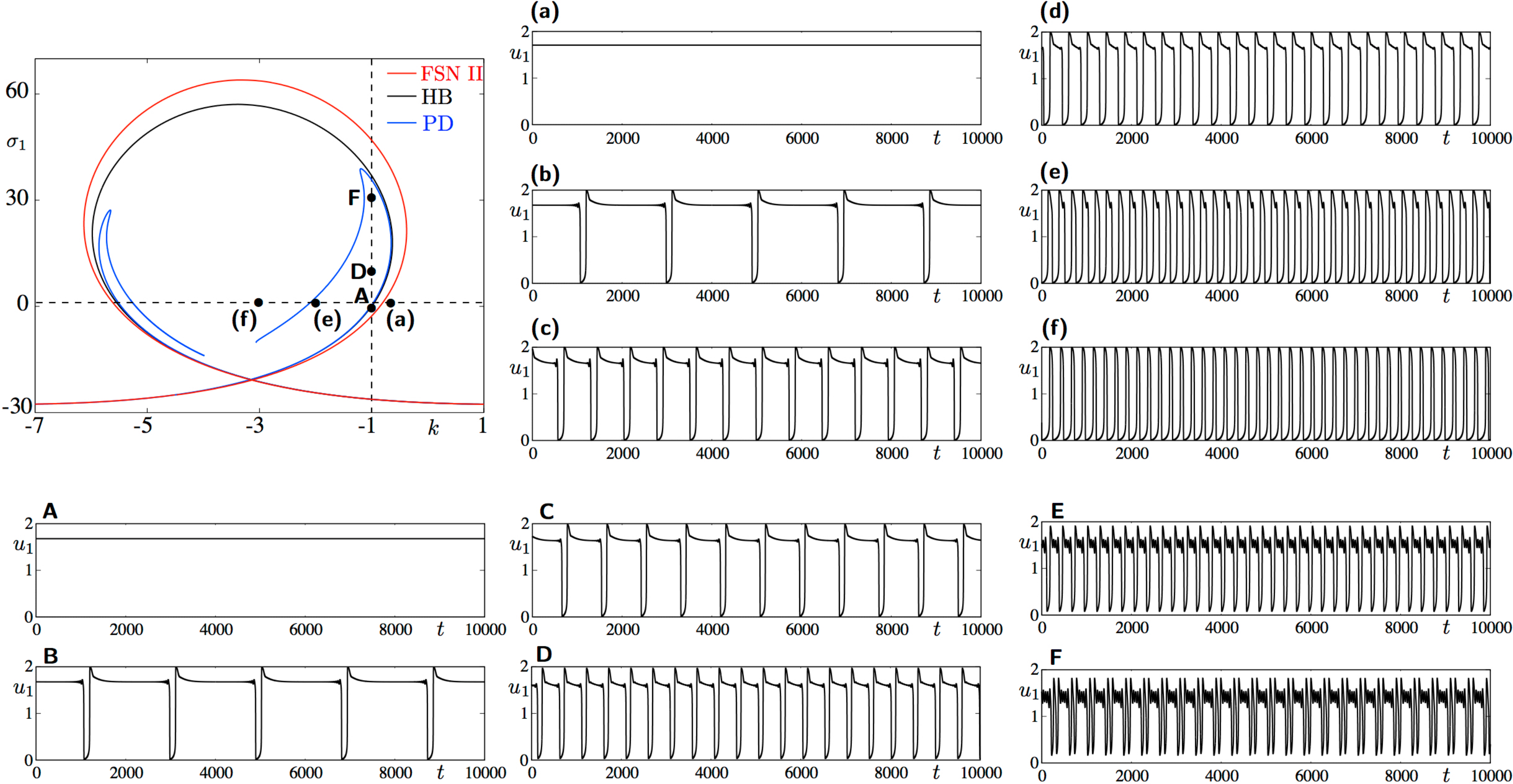}
	\caption{The top left panel shows a two-parameter bifurcation diagram of the 3D Wilson Cowan system~\eqref{WC3d} in the $(k,{\sigma}_1)$-plane, with Hopf (black) and period-doubling (blue) bifurcation branches.The red curve is the locus of FSN II bifurcation. All other panels show the long-term behavior of the system for particular values of $k$ with fixed ${\sigma}_1=0$ (panels (a) to (f)) and for particular values of ${\sigma}_1$ with fixed $k=-1$ (panels A to F), respectively.}
	\label{fig:drs}
\end{figure}
\begin{eqnarray}
0 &=& -u_1 + G_1(J_{11}u_1+J_{12}u_2+z_e)\label{rWC3d-u1}\\
   ~\dot{u_2} &=& -tu_2 + G_2(J_{21}u_1+J_{22}u_2)\label{rWC3d-u2}\\
   ~\dot{z_e} &=& k +\gamma z_e - u_1 - u_2\label{rWC3d-ze}
\end{eqnarray}
The constraint \eqref{rWC3d-u1} is eliminated by solving this algebraic equation in $u_2$. This provides an expression of $u_2$ as a function of $(u_1, z_e)$:
\[
u_2 = \frac{1}{J_{12}} \left(G_1^{-1}(u_1) - J_{11}u_1 - z_e\right) =: F(u_1,z_e,\sigma_1)
\]
and $G_1^{-1}$ is expressed in terms of the inverse of the $\erf$ function and of $\sigma_1$, dependence that is made explicit in $F$.  
We can now rewrite \eqref{rWC3d-u1}-\eqref{rWC3d-ze} in the form:
\begin{eqnarray}
		\nonumber\frac{\partial F}{\partial u_1}\dot{u_1} &=& -F(u_1, z_e, \sigma_1) + G_2(J_{21}u_1+J_{22}F(u_1, z_e, \sigma_1))\\
		 &&\qquad -\frac{\partial F}{\partial z_e}(k +\gamma z_e - u_1 -F(u_1, z_e, \sigma_1)) \label{reWC3d-u1}\\
		   ~\dot{z_e} &=& k +\gamma z_e - u_1 - F(u_1, z_e, \sigma_1)\label{reWC3d-ze}
   \end{eqnarray}
\begin{figure}[!t]
	\centering
		\includegraphics[width=.8\textwidth]{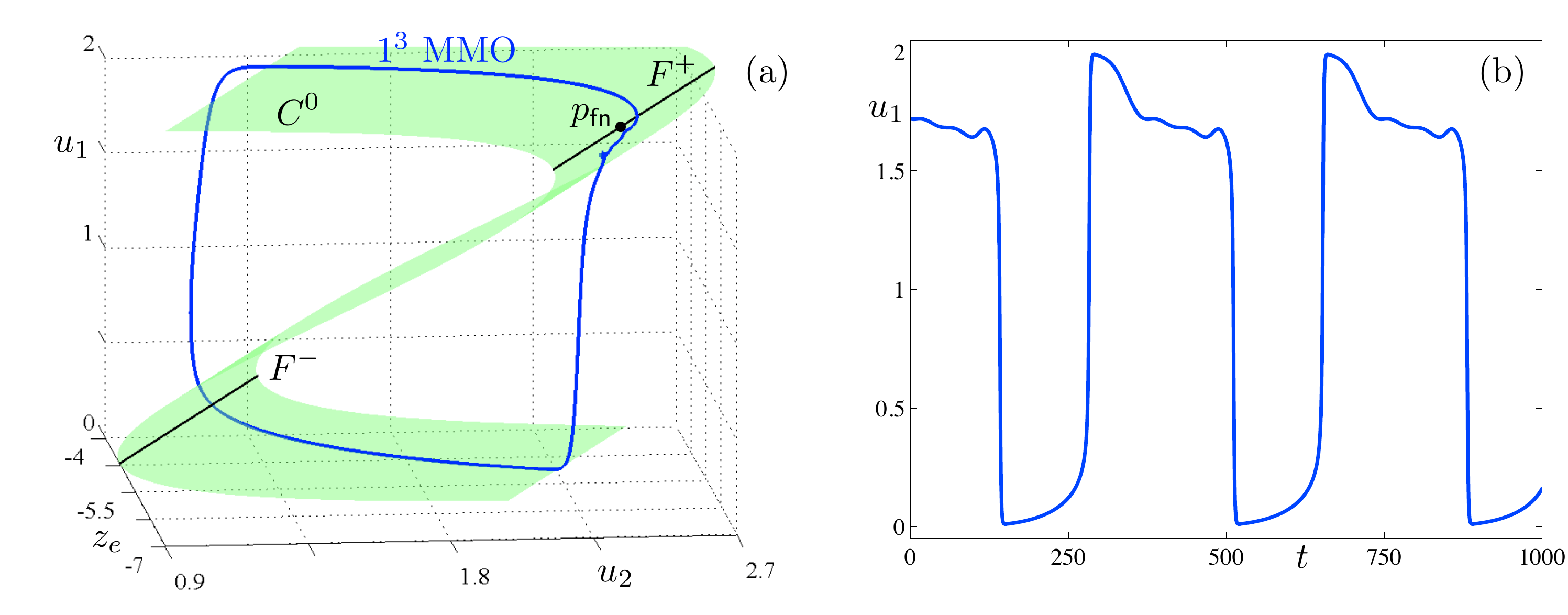}
	\caption{{Panel (a)} shows the critical manifold $C^0$ of system~\eqref{WC3d} (green surface) viewed in the three-dimensional phase space $(u_1,y,z_e)$. Also shown are both the fold curves $F^{\pm}$ (black lines) and an MMO of type $1^3$, solution to the problem, whose existence can be explained by the presence of a folded node $p_{\mathrm{fn}}$ (black dot) in the system. {Panel (b) shows the time profile of $u_1$ for this MMO}.}
	\label{fig:critmanmmo}
\end{figure}
Now the fold line for system \eqref{reWC3d-u1}-\eqref{reWC3d-ze} corresponds to the equation $\frac{\partial F}{\partial u_1}=0$
and folded singularities correspond to points on the fold line for which the RHS of \eqref{reWC3d-u1} is $0$.
An FSN II point is a point on the fold line for which the RHS of both \eqref{reWC3d-u1} and \eqref{reWC3d-ze} are zero.
Such points are of codimension 1 in the parameter space, and, under some non-degeneracy conditions,
correspond to a passage of a true equilibrium of \eqref{reWC3d-u1}-\eqref{reWC3d-ze} through the fold line~\cite{mk_mw_10}.
As a parameter goes through an FSN II point the system passes from a regime of excitability (stable equilibrium near the fold)
to a regime of MMOs or relaxation\footnote{In fact other types of dynamics
could occur, but a passage to MMOs or relaxation is what is often observed.}, depending on the global part of the flow \cite{mb-mk-mw_06}. On the level of folded singularities, FSN II corresponds
to a passage from folded saddle to folded node, and folded nodes are associated to the occurrence of MMOs \cite{mw_05, mb-mk-mw_06, md-jg-bk-ck-ho-mw_12}. Note eventually, as demonstrated in~\cite{jg_08}, the presence of a curve of Hopf bifurcations in the vicinity of an FSN II point. 

This is precisely the scenario of transition occurring in out system. Indeed, in Figure \ref{fig:drs}, we display in the upper left corner the FSN II curve (shown in red), computed using MATLAB based on the above formulae. Nearby (in black) is the curve of Hopf bifurcations (HB) for $\eps=0.005$, which, as expected, is a close approximation of the FSN II curve. 

Fine analysis of the passage from excitability to MMOs shows a typical rich bifurcation structure \cite{md-jg-bk-ck-ho-mw_12,jg_08}. This structure, occurring
as a result of changing the parameter $k$, is highlighted in figure~\ref{fig:mmo}. In particular, we find a cascade of period-doubling bifurcations (PD) (best seen on panel (b), which is an enlargement of panel (a)) and a family of complicated closed branches or \textit{isolas} (we computed four of them). This structure is compatible with the analysis done in~\cite{jg_08} on a minimal system of a singular Hopf bifurcation. 

\begin{figure}[!t]
	\centering
		\includegraphics[width=\textwidth]{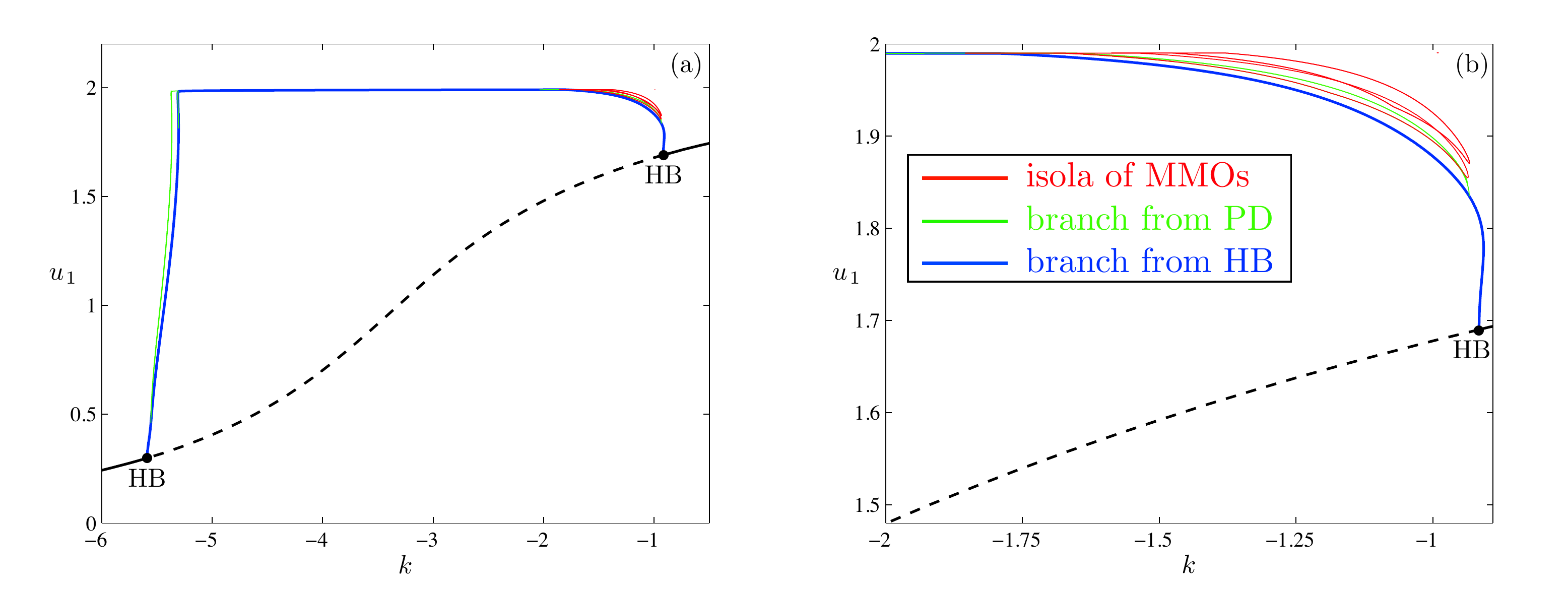}
	\caption{Bifurcation diagram of the original system~\eqref{WC3d} as a function of parameter $k$. Panel (a) shows a large view of the branch of equilibria (black) and the branches of periodic solutions (blue) born at two Hopf bifurcations (black dot) that are symmetric with respect to $k=0$. Panel (b) is an enlarged view in the parameter region where additional branches of periodic solutions exist, both branches born at multiple period-doubling bifurcations ($P\!D$) as well as closed branches (isolas). The vertical axis correspond to the computed $L_2$-norm of the solutions.}
	\label{fig:mmo}
\end{figure}

Changes in the parameters $(k,\sigma_1)$ correspond to paths in this diagrams governing the bifurcation scenarios. Two particular paths are displayed in Figure \ref{fig:drs}: (i) varying $k$ while keeping $\sigma_1=2$ fixed (small letters) and (ii) fixed $k=-1$ and $\sigma_1$ varying (capital letters), and a phase space view of an MMO trajectory is given in Figure \ref{fig:critmanmmo}. Multiple MMO traces corresponding to these two variations of parameters highly the typical traces found inside the closed curve corresponding to FSN II. The scenario (i) will be explored numerically in section~\ref{sec:Network} in the network equations, and scenario (ii) highlights the fact that for $k$ fixed and $\sigma$ increasing from $0$ to a positive value, we observe a transition from excitability to MMOs as a noise induced bifurcation. 

Therefore, the analysis of the system of ODEs arising from taking the average of the solution to the mean-field equations in Wilson-Cowan systems with slow adaptation currents yields to the conclusion that (i) qualitative behaviors sensitively depend upon noise levels, as we already learnt from the analysis of section~\ref{wc2d} and (ii) that very delicate phenomena such as canard-induced MMOs persist in the presence of possibly large noise. This phenomenon is, again, very surprising, and may only be understood in the context of collective effects in very large networks. Indeed, large noise of course will destroy the fine structure of small oscillations in regimes where the noiseless system presents mixed-mode oscillations, but we observe here that these persist for large noise as a collective effect. Even more interesting is that noise actually induces the apparition of such solutions, and that the shape of the solution sensitively depends on the precise noise levels neurons are submitted to. 

\section{Back to the network dynamics}\label{sec:Network}
Thus far, we showed that in the limit $N\to\infty$, the network equations~\eqref{eq:WCNet} can be reduced to mean-field equations whose mean satisfy an ordinary differential equation where the noise parameter, appearing as a parameter of the limit equation, is related to the presence of canards and MMOs. We now return to the original network equations~\eqref{eq:WCNet} with finite $N$. We show that the predicted presence of complex oscillations as well as canard explosion are generally recovered in the simulations of large but finite networks. In certain parameter regions, systematic errors related to the finiteness of the number of neurons considered however appear. These finite-size effects, corresponding to the presence of random MMOs in two-populations networks (in which cases no MMO is present in the mean-field limit) are analyzed in view of the competition between multiple attractors of the mean-field limit and finite-size fluctuations around that limit. In three populations networks, the MMO solutions are generally very fragile close from the transitions, and the finiteness of the number of neurons generally induces early jumps shortening the duration of the small oscillations phase. 

The simulation of large networks presents a number of theoretical and technical obstructions. Indeed, at least three difficulties arise: the stiffness of the solutions and the large periods of relaxation oscillations related to the slow-fast nature of the equations, the dimensionality issue related to the number of neurons considered ($N$), and the stochastic nature of the trajectories. The stiffness issue is generally taken care by using refined simulation algorithms, that generally do not have a stochastic counterpart. Here, accurate simulations will necessitate to use small timesteps $dt$ and large simulation times $T$ (due to the typical periods of relaxation oscillations), which will raise two difficulties: the computation time will increase proportionally to $T/dt$, and the number of independent random variables we need to draw during the simulation will increase proportionally to $NT/dt$, which can raise concerns about the fact that the correlations of the pseudo-random sequence of random numbers may surface. Moreover, with large simulation times, error accumulation will arise, proportionally to the computation time. With these limitations in mind, we developed a vectorized simulation algorithm based on Euler-Maruyama scheme implemented in Matlab\textregistered, which remains relatively efficient (computation times are almost independent of the network size) at the levels of precision needed. Typically, the simulation of a $N=2\,000$ neurons network over a time interval $T=1\,500$ with timestep $dt=0.01$ will take $66s$ on a Macbook Pro with processor 2.3 GHz intel core i7 with 16GB 1600 MHz DDR3. The specific development of computational algorithm, precise analysis of the error ad limitations raised by the number of random numbers drawn are not in the scope of the present paper, and we are confident that our implementation, with proper choices of timestep, total times and number of neurons, provide accurate descriptions of the network behavior in the examples provided below. 

\subsection{Canard Explosion in the 2 populations Wilson-Cowan system}
We start by analyzing the finite-size effects in our two population network. For our choice of parameters, we have seen that a subcritical Hopf bifurcation and canard explosion occurs when varying the intensity of the noise (see Fig.~\ref{fig:BifDiagColors2d}). The full bifurcation diagram, plotted in Fig.~\ref{fig:wc2d} displayed a canard explosion with a branch of limit cycles persisting until very large values of the noise standard deviations (above 45). Such standard deviations correspond to extremely large noise that are generally very hard to simulate accurately and seem unreasonably large in the Wilson-Cowan model. A restricted bifurcation diagram is displayed in Fig.~\ref{fig:BifDiagColors2d} and shows that, as a function of noise levels, the system either presents a unique stable stationary distribution (blue region), an unstable fixed point and a stable periodic orbit corresponding to relaxation oscillations for large noise (pink region), and an intermediate noise region (yellow) where we have co-existence of a stable stationary distribution and a stable periodic orbit, separated by an unstable periodic orbit. This parameter region is referred to as the bistability regime. 

In the case where a unique stable solutions exists for the limit averaged equation (pink or blue regions), network trajectories will display random fluctuations well approximating the attractor. This is what is plotted in Fig.~\ref{Fig:UniqueAttractor2d}, both in the case of stationary and periodic solutions. The canard explosion, displayed in section~\ref{sec:Bifurcations}, corresponds to the sharp transition from fixed point to relaxation oscillations, and due to the fact that the canard explosion occurs through subcritical Hopf bifurcation prevents from observing the full canard explosion and canard cycles, that correspond to unstable trajectories of the limit ODE and of the network equations.

\begin{figure}[htbp]
	\centering
		\subfigure[Bifurcation Diagram]{\includegraphics[width=.3\textwidth]{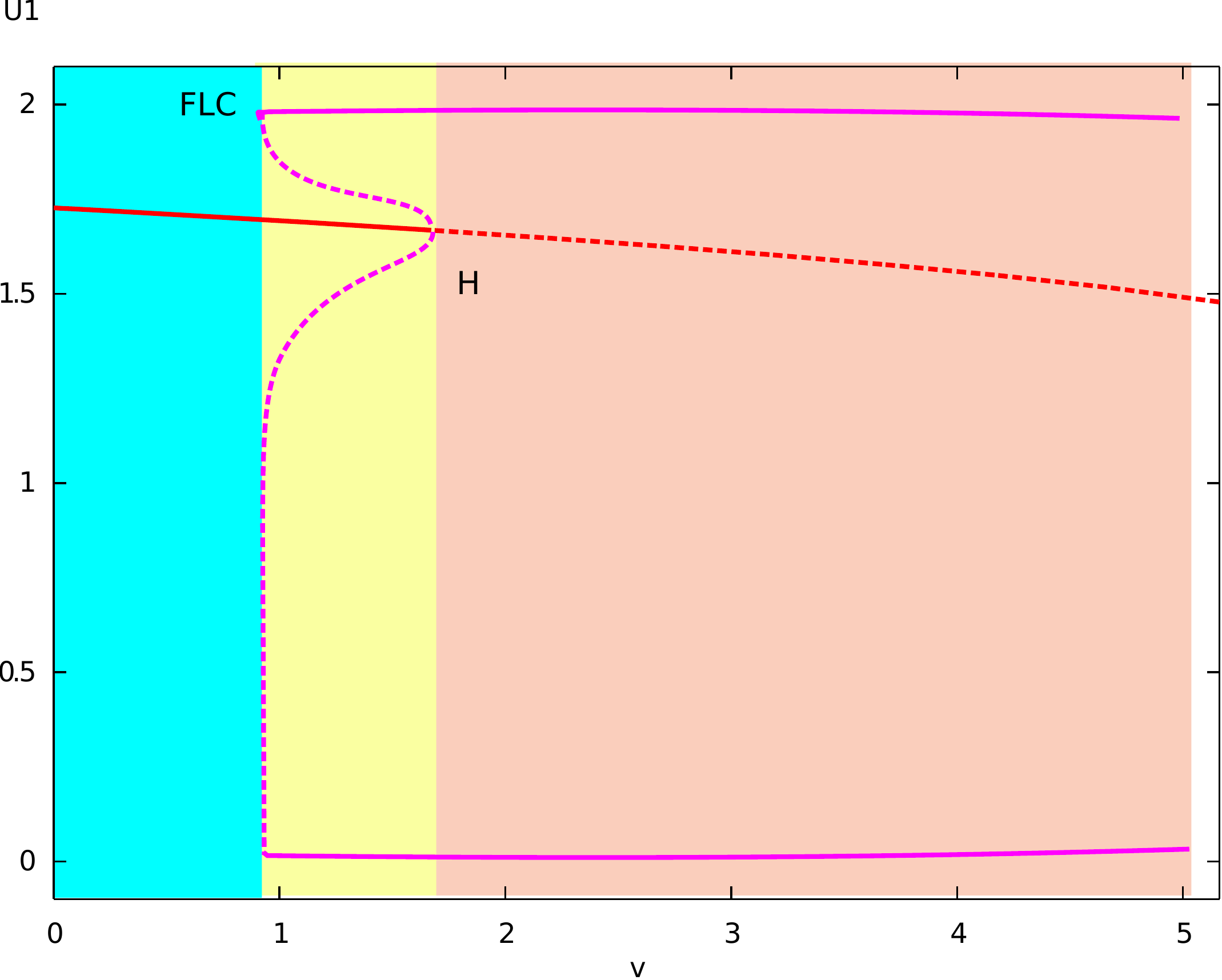}\label{fig:BifDiagColors2d}}
		\subfigure[Stationary regime, $v=0.5$]{\includegraphics[width=.65\textwidth]{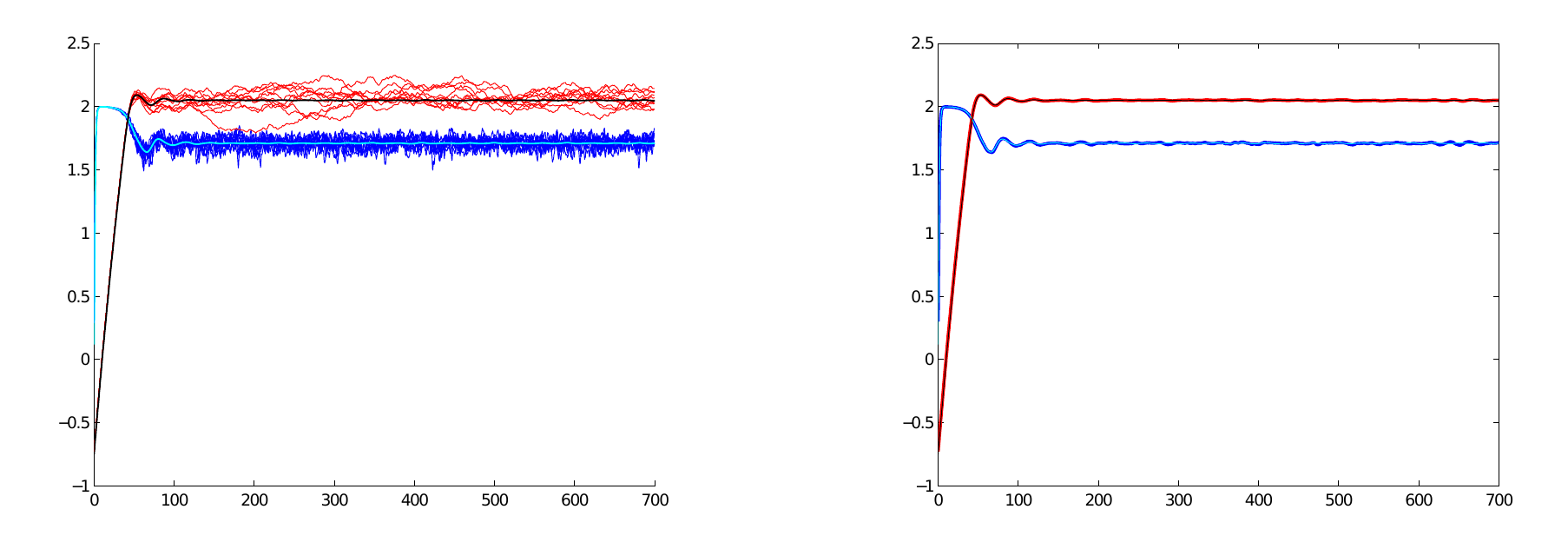}\label{fig:StationRegime2d}}
		\subfigure[Periodic Regime, $v=2$]{\includegraphics[width=.65\textwidth]{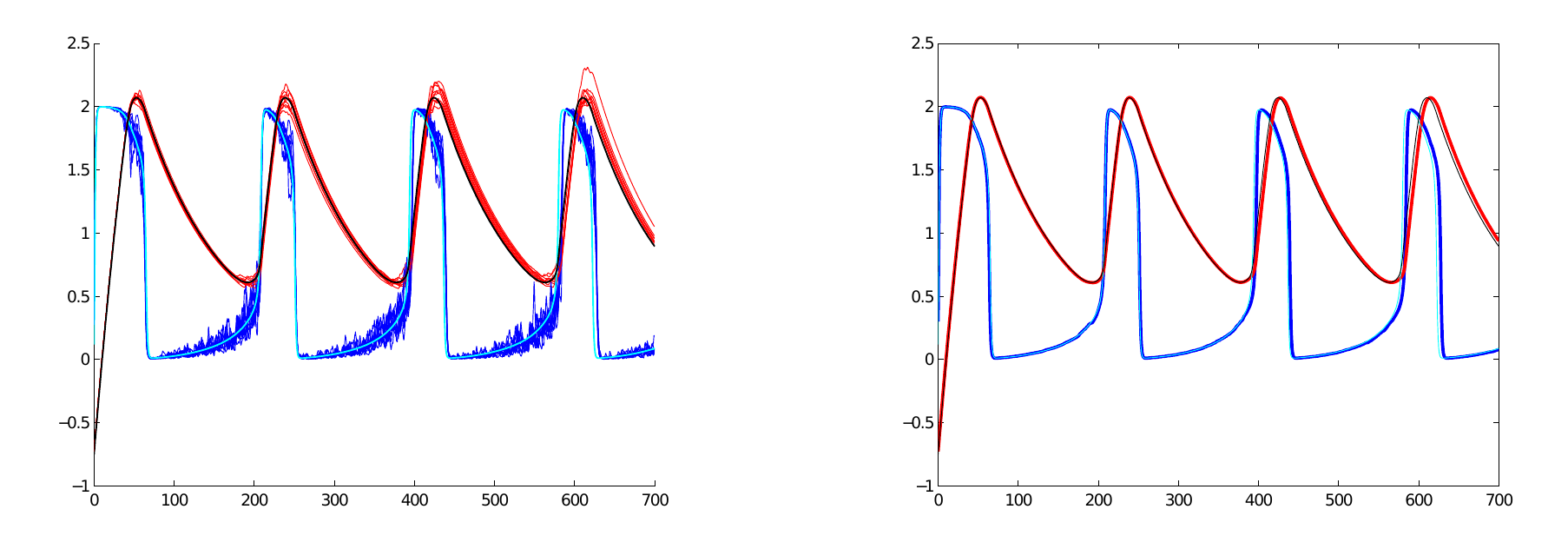}\label{fig:PeriodicRegime2d}}
	\caption{Network Simulations for the Wilson-Cowan two populations system, whose asymptotic dynamics was analyzed in~\ref{fig:wc2d}. (a) zoomed bifurcation diagram for relevant noise intensities, (b) and (c) are examples of trajectories of the network equations in the cyan and pink regions respectively. (b): stationary solutions, $v=0.5$: (left) 10 randomly chosen trajectories of the network (blue: population 1, red: population 2) display random fluctuations of the around the predicted stationary average (cyan: population 1 and black: population 2) computed through the limit ODE. (right) empirical average vs theoretical limit average, show a very precise fit. (c) Same simulation and color code as in (b) but for $v=2$ corresponding to the oscillatory regime (pink region). Simulation parameters: $2\,000$ neurons per population, Euler-Maruyama scheme with $dt=0.01$ and randomly chosen initial conditions. }
	\label{Fig:UniqueAttractor2d}
\end{figure} 

In the bistability regime, trajectories are more complex and finite-size effects will result in random switches between the two attractors. Typical shape of the phase plane, displayed in Fig.~\ref{fig:PhasePlane2d}, show the coexistence of relaxation oscillations and stable fixed points. The separatrix between the two attractors is the unstable limit cycle arising from the Hopf bifurcation, winding around the fixed point. Incidentally, we observe that the unstable periodic orbit and the relaxation oscillations cycles are extremely close (they are arbitrarily close when the intensity of the noise approaches the value corresponding to the fold of limit cycles). Therefore, in this parameter region, random switches from relaxation oscillations to random fluctuations around the stable fixed point will occur, phenomenon that we term random attractor switching. 
\begin{figure}[htbp]
	\centering
		\subfigure[Phase plane, $v=1.1$]{\includegraphics[width=.4\textwidth]{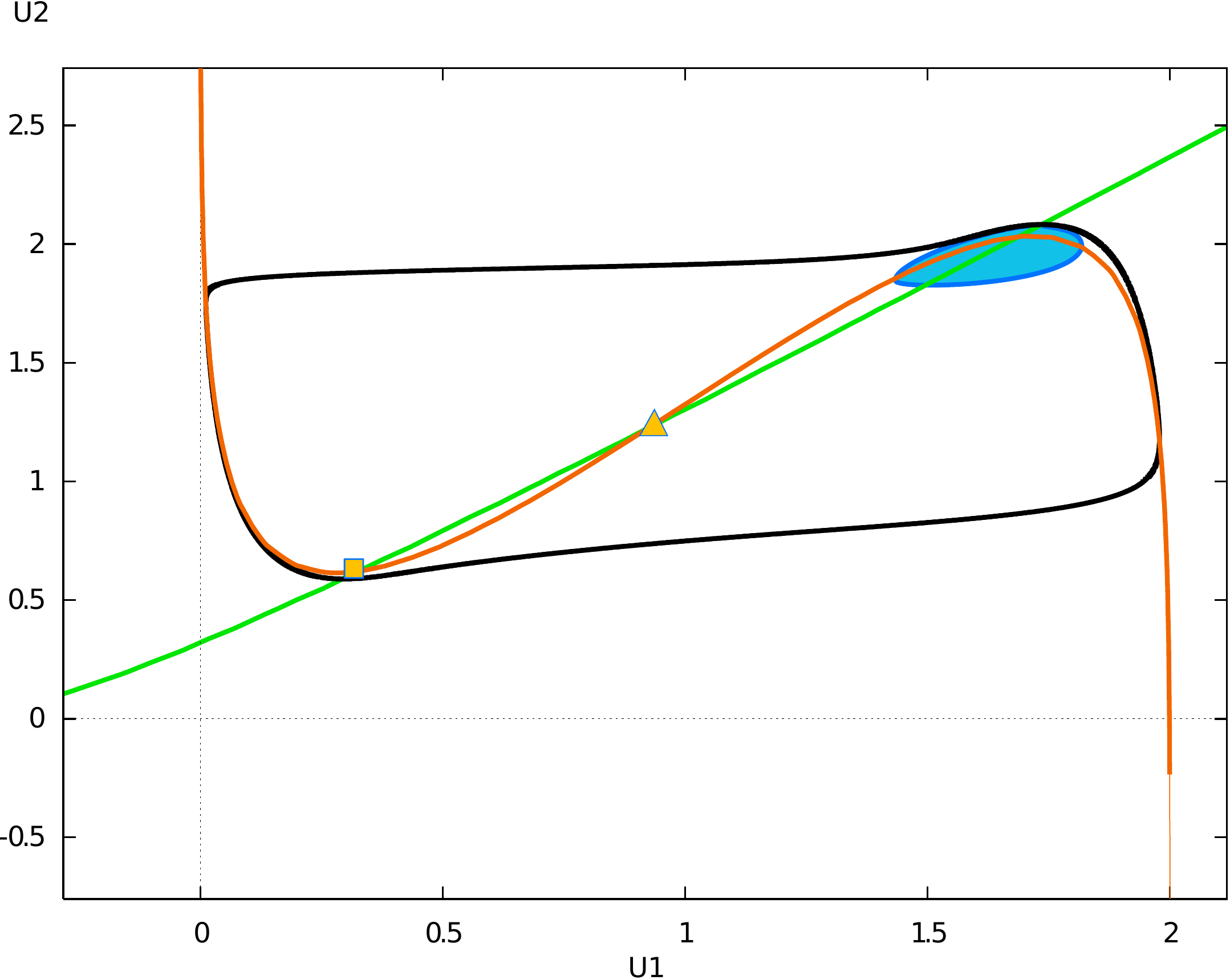}\label{fig:PhasePlane2d}}
		\subfigure[Trajectories, $v=1.1$]{\includegraphics[width=.7\textwidth]{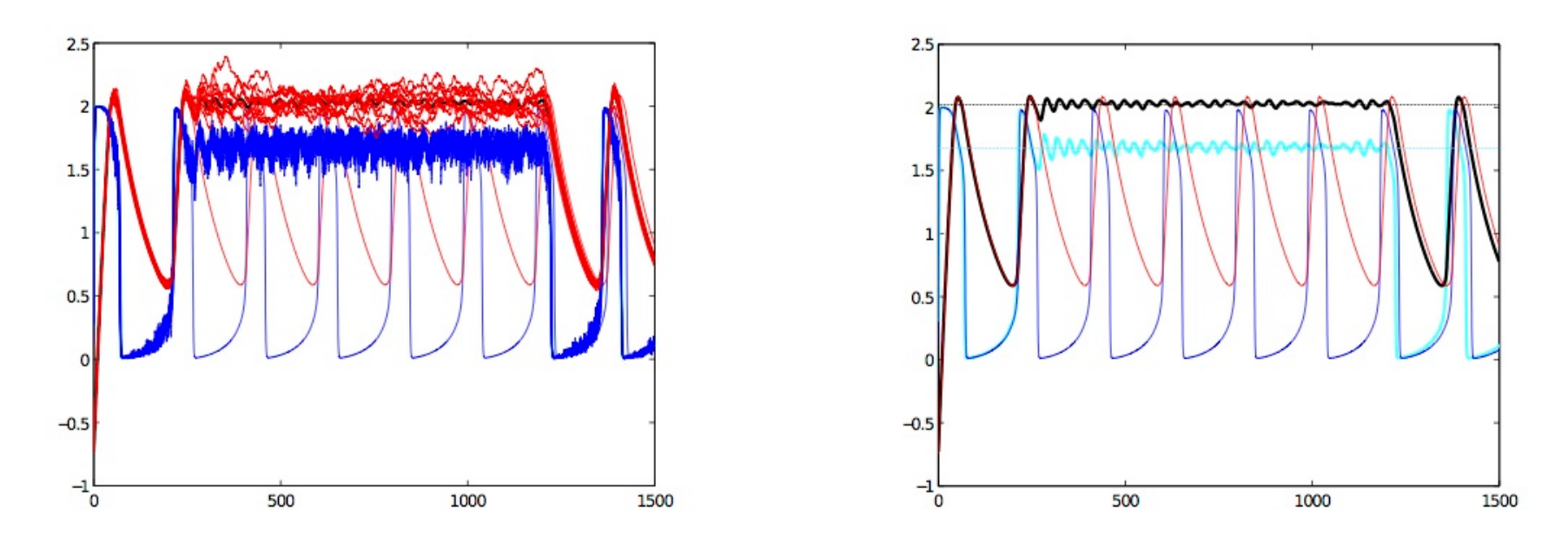}\label{fig:TrajSwitches}}
	\caption{Bistable regime in the two-populations Wilson-Cowan system. (a) Geometry of the phase plane of the limit ODE, shows the presence of a relaxation oscillations cycle (black) and an unstable cycle (blue) delineating the attraction basin of the stable fixed point. Red and Green curves are the nullclines of the system, the triangle (square) depicts a saddle (unstable focus) fixed point. (b) depicts a typical trajectory of random attractor switching, with $1\,000$ neurons per population and $v=1.1$, same color code as in Fig.~\ref{Fig:UniqueAttractor2d}. Dotted lines depict the location of the stationary mean for both populations.}
	\label{fig:Bistability}
\end{figure}

Similarly to what was shown in~\cite{berglund-landon:12}, we may interpret the switching trajectories as random MMOs. However, in contrast to their setting, the noise perturbation involved in our case are not driven by white noise with constant standard deviation. As we analyze in a forthcoming study, the finite-size perturbations correspond to Gaussian perturbations with standard deviation function of the solution of the mean-field equations. The nature of this process implies that the random switches do not occur completely randomly, but depend on the size of the fluctuations, which is governed by a function of the mean-field equation. A consequence will be, as presented in the example plotted in Fig.~\ref{fig:TrajSwitches}, that the system has a bias to switch in phase with the virtual periodic attractor. This surprising phenomenon will be analyzed in greater detail in a forthcoming study. 

Let us eventually discuss the occurrence of switches as a function of the size of the network and of the parameters. We observe that the attraction bassin of the stationary solution becomes increasingly large as noise approaches the value corresponding to the fold of limit cycles, and increasingly small when approaching the Hopf bifurcation. Close from the value of the fold of limit cycles, random switches from the periodic to the stationary solutions will become very probable and switches from the stationary to the periodic solutions are less probable, implying that the averaged time spent in the neighborhood of the stationary solution will become larger as we approach the fold of limit cycle. A symmetrical phenomenon will arise close from the Hopf bifurcation. In order to illustrate this phenomenon, we display in Fig.~\ref{fig:AverageTime} the average time spent in a neighborhood of the stationary solution (actually inside the deterministic attraction bassin of the stable fixed point depicted in blue in Fig.~\ref{fig:PhasePlane2d}) as a function of the noise level, averaged across 20 trajectories. These switching probabilities also depend on the size of the network. Indeed, as the number of neurons tends to infinity, that probability tends to zero, and the trajectory will be deterministically locked to that predicted by the mean-field theory and the initial condition, and the smaller the network size, the more likely the switches will occur. 

\begin{figure}[htbp]
	\centering
		\includegraphics[width=.3\textwidth]{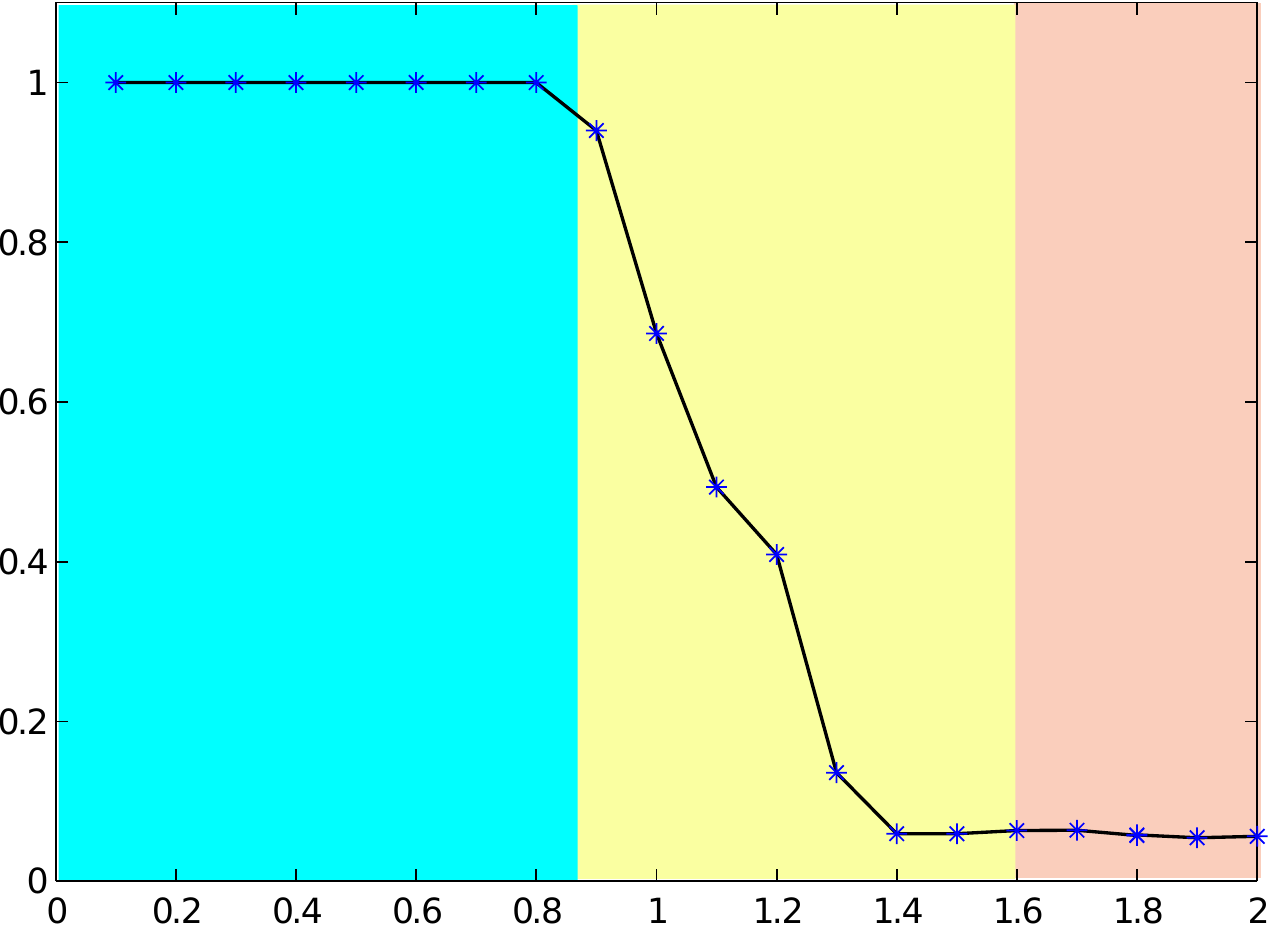}
	\caption{Proportion of time along trajectories spent in the vicinity of the stationary solution as a function of noise levels. Colors correspond to the regions in the bifurcation diagram~\ref{fig:BifDiagColors2d}. }
	\label{fig:AverageTime}
\end{figure}

We therefore conclude from the present analysis that finite-size effects barely affect the solution, and that the mean-field model accurately describes the behavior of the network in most of the parameter space, except during the canard explosion, in an exponentially small region of parameters, and in that region, random switches between multiple attractors may occur, as a function of the network size and the geometry of the limit dynamical system. 

\subsection{Mixed-Mode Oscillations: early jumps and hidden small oscillations}
We now investigate the behavior of finite-sized networks in the 2 populations Wilson-Cowan system with slow adaptation currents analyzed in the mean-field limit in section~\ref{wc3d}. The analysis the resulting mean-field system showed that the system presented a FSN II bifurcation related to the presence of mixed-mode oscillations, the shape of which sensitively depended of the noise levels $\sigma_1$ and $k$, as depicted in Fig.~\ref{fig:drs}. The reduction obtained of these equations allowed analyze the behavior for extremely high values of the noise which are not accessible through numerical simulations of the noisy network, chiefly due to precision constraints as we mentioned.

We now confront the different behaviors observed in section~\ref{wc3d} to simulations of the noisy network equations with the same parameters. We expect in this case that the trajectories obtained in the MMO regime to be less resilient to perturbations than the stationary and periodic orbits analyzed in the two-populations network, due to the sensitivity of the attractor in particular in the small oscillations phase. In contrast with the analysis of the two-populations Wilson-Cowan model, we will therefore observe more drastic finite-size effects even in regimes where there exists a unique attractor, since it is the very structure of the attractor that presents sensitivity to perturbations. These will lead to escape the attractor during the small oscillations phase, or \emph{early jumps}. 

This is what we observe in Fig.~\ref{fig:MMOsNet} where we reproduce the bifurcation scenario displayed in Fig.~\ref{fig:drs} for fixed noise $\sigma_1=2$ and different values of the parameter $k$ (the only scenario we did not display was (B), which requires very simulations on very long time intervals due to the very large period of the mixed-mode oscillation). In all cases, we recover the qualitative nature of the solutions predicted in the infinite-size limit, and early jumps, corresponding to systematic random escape of the attractor, occur due to finite-size effects. Moreover, we observe that the small oscillations tend to be hidden by the noise related to the finite-size effects. 
\begin{figure}[htbp]
	\centering
		\subfigure[$k=-0.8$]{\includegraphics[width=.3\textwidth]{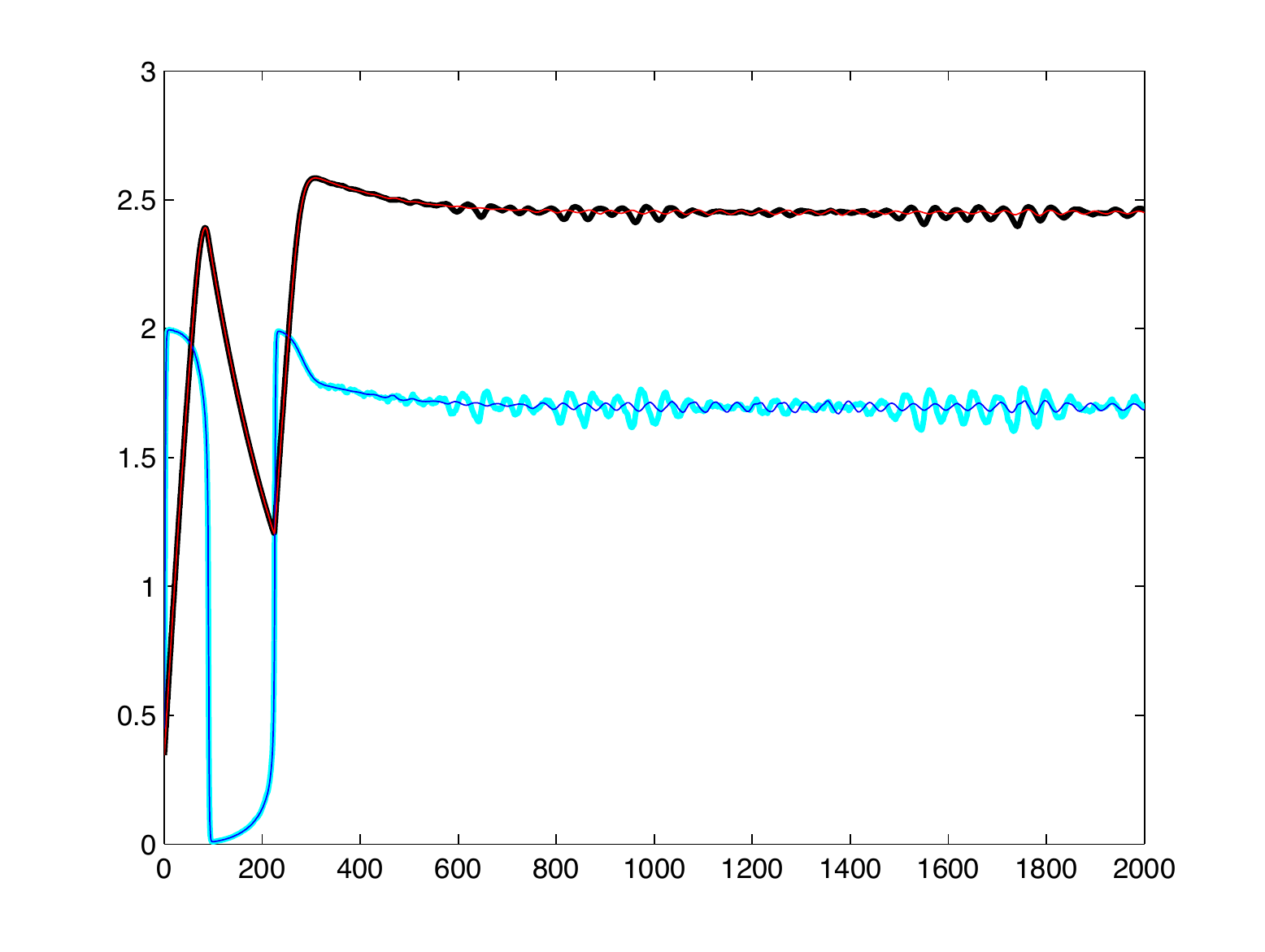}}
		\subfigure[$k=-1.2$]{\includegraphics[width=.3\textwidth]{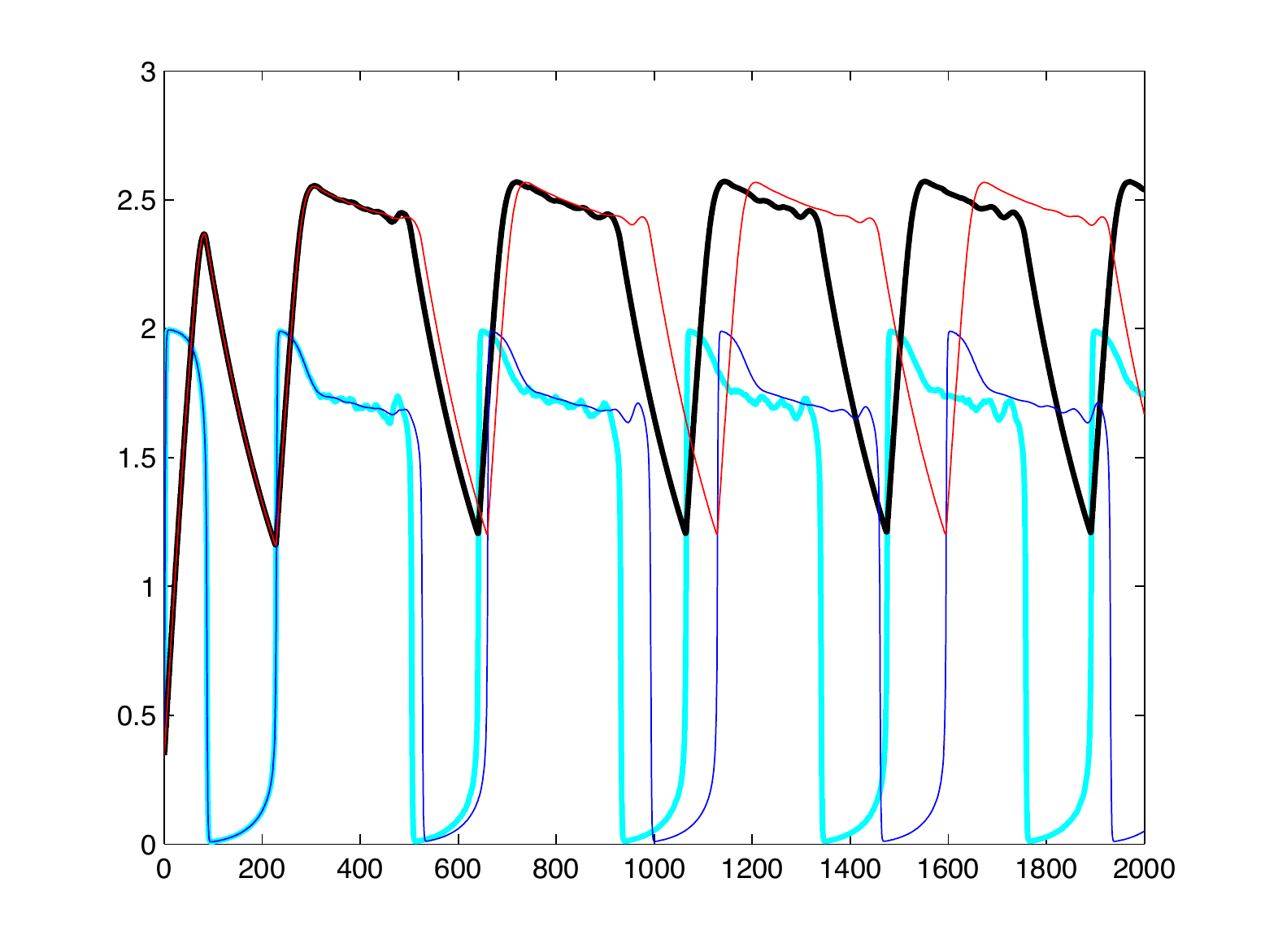}}
		\subfigure[$k=-1.5$]{\includegraphics[width=.3\textwidth]{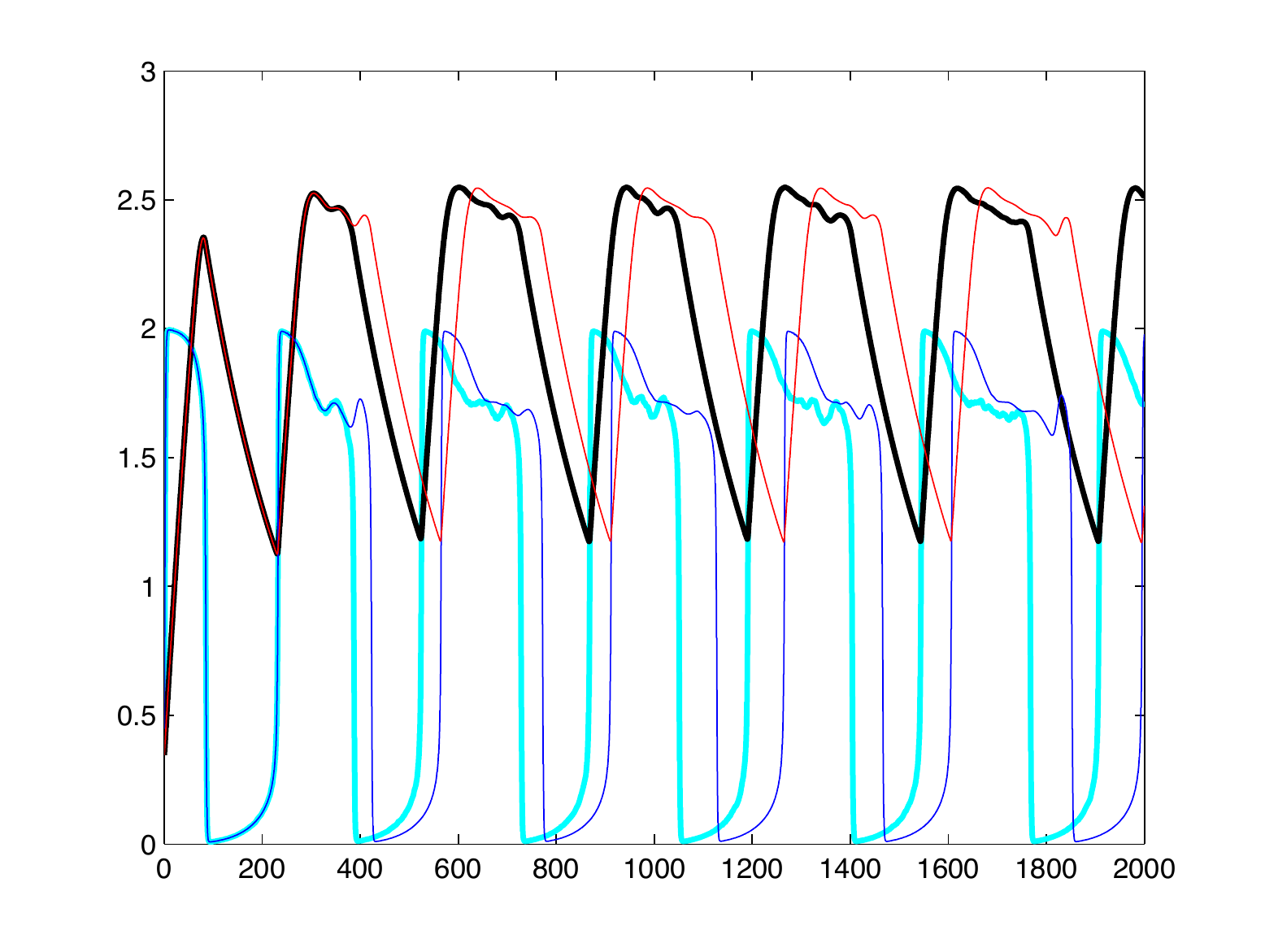}}
		\subfigure[$k=-2$]{\includegraphics[width=.3\textwidth]{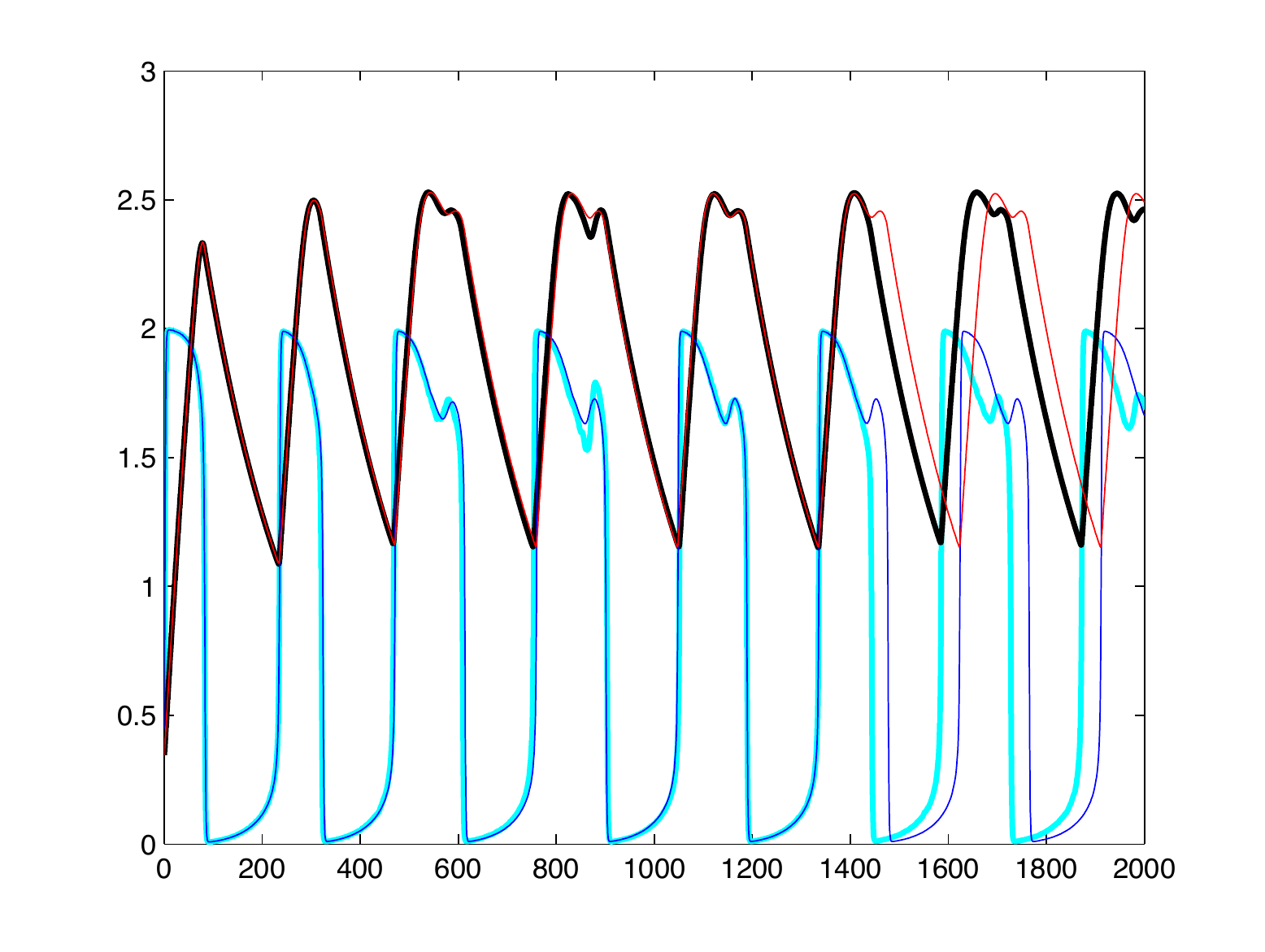}}
		\subfigure[$k=-3$]{\includegraphics[width=.3\textwidth]{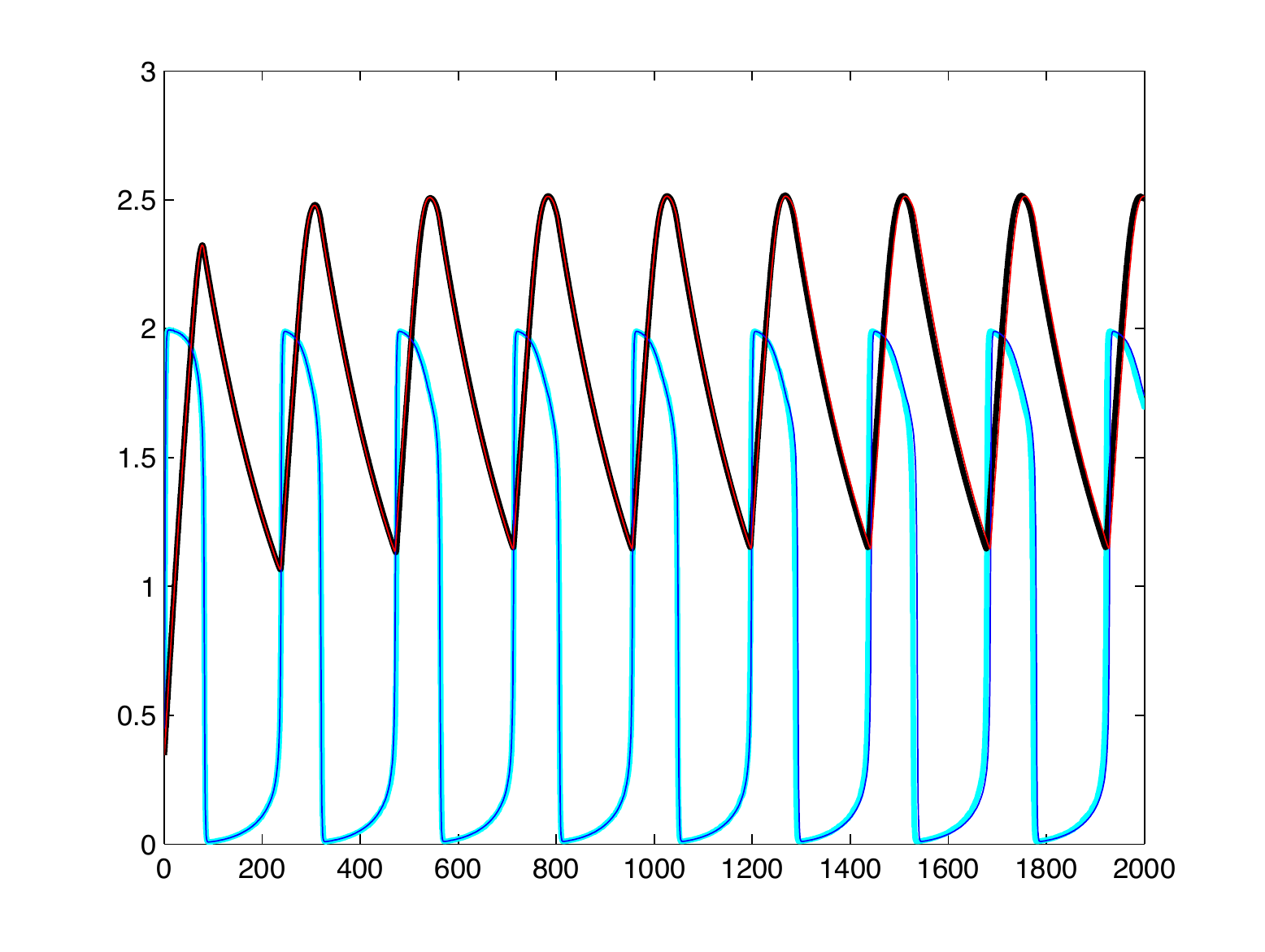}}
	\caption{Solution to the mean-field equations~\eqref{eq:WCMFE} (red and blue lines) and empirical averages of the network equation for $N=2\,000$ neurons in the cases (a), (c), (d), (e) and (f) of Fig.~\ref{fig:drs}. In each case, we observe that qualitative behaviors are recovered in the network equations, and that early jumps occur systematically.  }
	\label{fig:MMOsNet}
\end{figure}
These two finite-size effects of course disappear when considering larger networks and one recovers with precision the attractor predicted by the theory, as we show in Fig.~\ref{fig:MMOPrecise} for a network of $N=10\,000$ neurons. But as soon as the number of neurons is smaller, the network will systematically fail to follow precisely the predicted MMO and will generally escape the attractor prior than the infinite-sized system. 
\begin{figure}
	\centering
	\subfigure[$k=-1.5$, $v_1=2$]{\includegraphics[width=.45\textwidth]{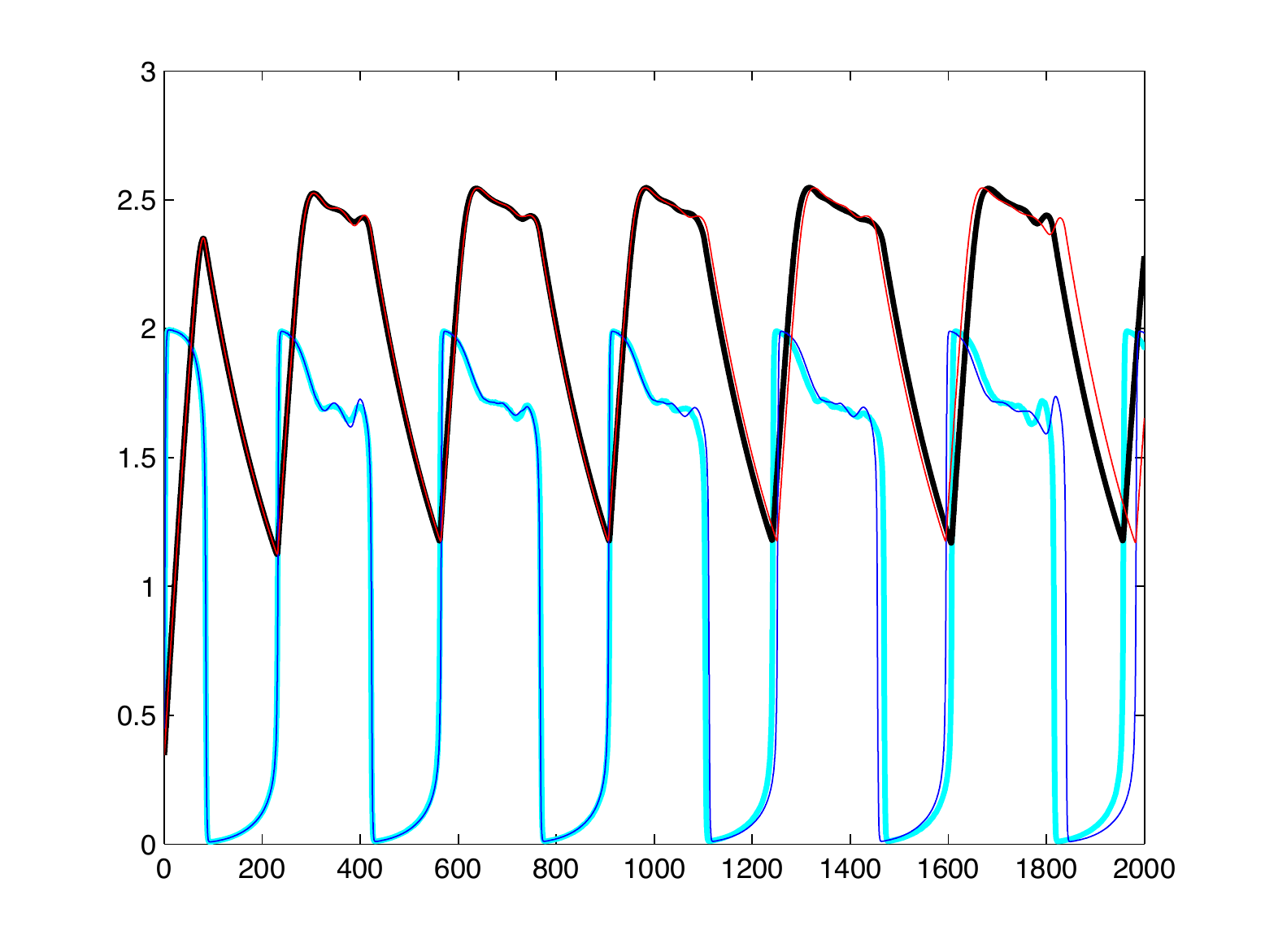}}
	\subfigure[$k=-1.1$]{\includegraphics[width=.45\textwidth]{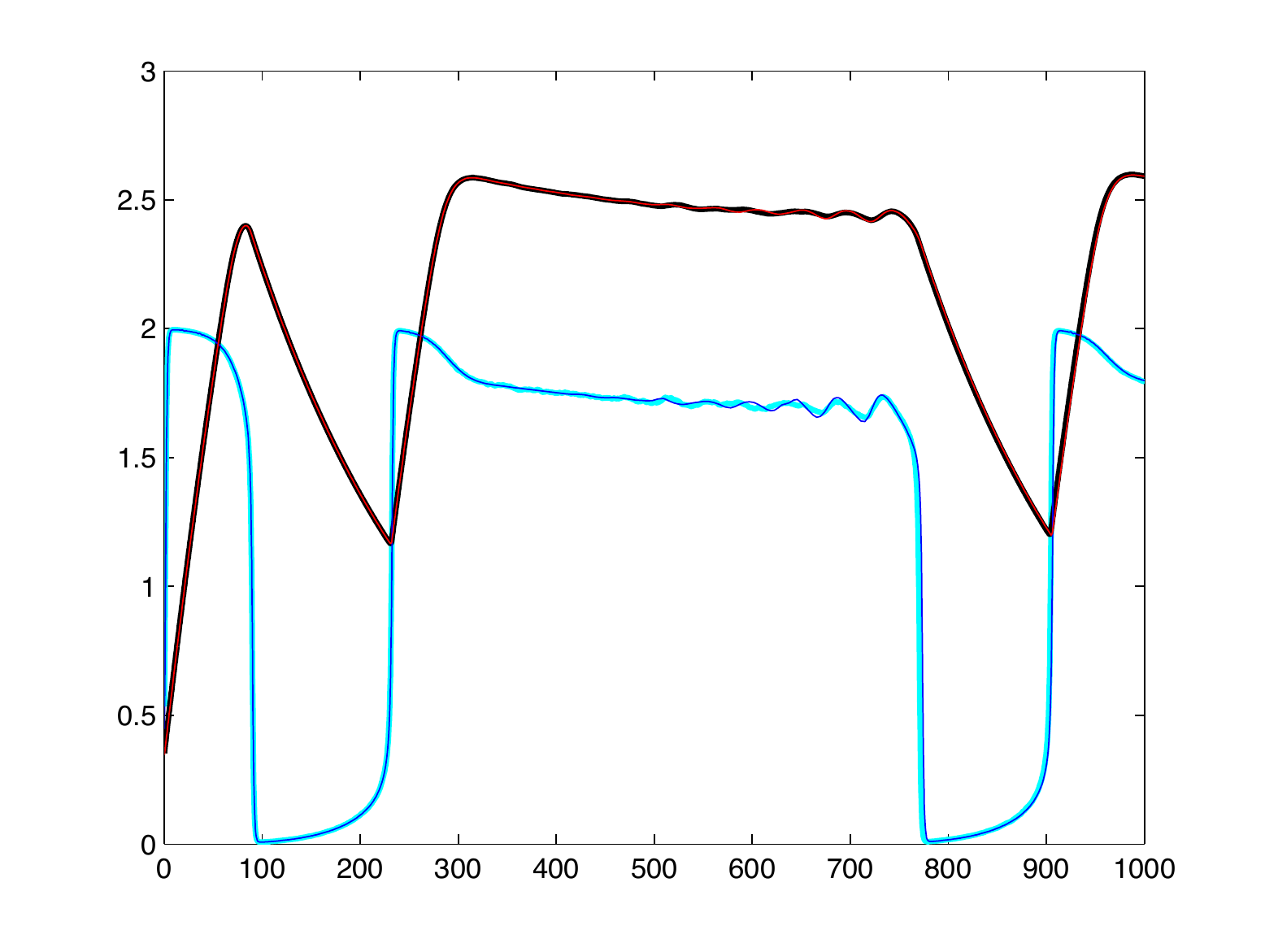}}
	\caption{Same quantities as in Fig.~\ref{fig:MMOsNet} for $N=10\,000$ neurons, (a) same parameters as in Fig.~\ref{fig:drs}(d) and (b): same parameters as in Fig.~\ref{fig:drs} with $k=-1.1$, trajectory presenting a number of visible small oscillations.}
	\label{fig:MMOPrecise}
\end{figure}

\section{Conclusion}
In a number of applied domains such as physics, chemistry and neurosciences, the emergence of complex oscillations such as mixed-mode oscillations have been theoretically related to canard explosions and multiple timescales dynamics of deterministic dynamical systems (see~\cite{md-jg-bk-ck-ho-mw_12} for a review). But the mechanisms underlying the existence of such orbits generally involve a number of interacting agents, for instance neurons, that are characterized by a stochastic activity. It was hence important in that context to show that such orbits persist in these contexts, and to identify the mechanisms accounting for the emergence of such collective behaviors. At the level of one single cell with stochastic perturbations, studies showed that systems presenting canard explosions displayed random MMOs, with the effect of randomly modifying the period of the oscillations and changing the qualitative nature of the solution even in small noise~\cite{berglund-landon:12}. In this paper, the authors analyze stochastic perturbations of Fitzhugh-Nagumo model in the relaxation oscillations regime concluded indeed noise resulted in sensible modifications of the trajectories with random interspike intervals and trajectories similar to MMOs with a random number of small oscillations that asymptotically distribute as a geometric random variable. As of mixed-mode oscillations, Berglund and collaborators~\cite{berglund-gentz-kuehn:12} analyzed the effect of Gaussian noise on a 3 dimensional system with different timescales known to feature mixed-mode oscillations and show that noise changes the global dynamics and mixed mode patterns, and that there exists a maximal amount of noise above which small oscillations are hidden by noise. These studies pointed towards the fact that other phenomena, more resilient to noise, took place in large noisy systems giving rise to such complex oscillations in the presence of multiple timescales. 

Here, we revisited these questions and proposed that self-averaging of the trajectories in large networks could provide a simple generic explanation to the emergence of complex oscillations in large-scale noisy networks. We based our analysis on a classical model of neuronal network, the Wilson-Cowan system, which describes heuristically the dynamics of firing-rates of neurons. This system presented two advantages in our view. First an unsolved mathematical problem was to rigorously derive asymptotic limits of these networks with nonlinear mean-field interactions as the number of neurons tends to infinity. The nature of is indeed very different to usual network equations analyzed in the domain of mean-field limits and propagation of chaos as developed in gas dynamics and extended to neurosciences~\cite{sznitman:89,touboulNeuralfields:11}, since the interacting term is a nonlinear function of a mean-field term, and moreover that the neurons are not governed by a stochastic differential equation but rather by a differential equation with stochastic coefficients $\xi^i_t$. This lead us to extend the theory of mean-field limits\footnote{The proof uses similar tools as usual analysis.} towards this new kind of equations, which we developed in a relatively general fashion in section~\ref{sec:MFLimits}. Applied to Wilson-Cowan equations, this result allowed us to exactly reduce the network equations to a multiple timescales ODE acting on the mean of the solution. Interestingly, these equations reduce to a modified Wilson-Cowan system where the noise level is embedded in the nonlinearities. 

The analysis of the mean equation developed in section~\ref{sec:Bifurcations} had at least two interests. First, the fact that the asymptotic equation on the mean of the solution is itself solution of a Wilson-Cowan type of equation allowed to go deeper in the understanding of these models. And indeed, the behavior of Wilson-Cowan systems in the presence of multiple timescales was still an open problem important to solve since the original Wilson-Cowan model introduced in 1972 in~\cite{wilson-cowan:72} had an intrinsic slow-fast structure that was never explored as such in the literature. Second, this analysis allowed to understand the qualitative role of noise in the dynamics. As was done in~\cite{touboul-hermann:11,touboulNeuralFieldsDynamics:11}, we demonstrated here again that the noise did not have a pure disturbing effect on the trajectories but rather a dramatic qualitative effect on the form of the solutions, governing transitions from stationary to periodic behaviors and complex oscillatory behaviors. 

Of course, the same study would hold in the case of the simpler Amari-Hopfield model analyzed in~\cite{touboul-hermann:11}, where the interaction between neurons is the sum of the firing-rate seen as a sigmoidal transform of the voltage. These models do not take into account the nonlinearities in the synaptic integration taken into account in Wilson-Cowan systems. For such networks, the mean-field equations has Gaussian asymptotic solutions, whose mean and standard deviation satisfy a closed set of ordinary differential equations~\cite{touboul-hermann:11}. Our analysis can easily be extended to such systems, where the limit equations obtained can be shown to have Gaussian solution whose mean satisfy the set of ODEs:
\[
	\tau_{\alpha}\der{\mu_{\alpha}}{t}=-\mu_{\alpha}+\sum_{\beta=1}^P J_{\alpha\beta}G_{\alpha\beta}(\mu_{\beta},\sigma_{\alpha})
\]
where $G_{\alpha\beta}$ is similar to our effective sigmoid function~\ref{eq:EffectiveSigmoid} and embeds dependence on the noise level $\sigma_{\alpha}$. In the presence of multiple timescales, we expect to observe the same kind of phenomena, and a similar dependence on noise levels, and these phenomena may be rigorously demonstrated in the same flavor as we did in the present paper. 

Such rigorous results as proved in the present manuscript provide a proof of concept of the role of noise in large-scale multiple timescales systems. The question that may arise is whether similar phenomena arise in networks of excitable elements. In that case, the rigorous reduction to ODEs is no more possible, and proving rigorous results amounts studying infinite-dimensional systems given by non-local partial differential equations (see e.g.~\cite{touboulNeuralfields:11} for a formulation of the problem in excitable systems). However, preliminary numerical explorations of the problem tend to show that the same type of phenomena arise in these systems. The deep analysis of these systems and the existence of canards and complex oscillations in relationship with noise levels is a direct perspective of this work we are currently analyzing. 

Another deep problem raised by the present analysis is the systematic characterization of the finite-size effects in bistable regimes. Indeed, we have observed that when there is a unique attractor of the mean-field equations, the finite-sized networks approximated fairly the predicted behavior, whereas in the bistable regime, switches occur randomly between the two attractors, but with a surprising regularity. Using the theory of the fluctuations around the mean-field limits~\cite{sznitman:84a,fernandez1997hilbertian}, we will explore in depth the origin of this regularity in a forthcoming study. 

The results obtained in this paper provide further insight on the role of noise in the dynamics of neuronal networks. More than a pure perturbation of the trajectories, noise shows to govern the dynamics of the networks, and moreover, the presence of canard explosions further assess that this dependence is extremely sensitive. This shows that stochastic networks displaying complex oscillations may be related to very finely tuned noise levels. This fine characterization of the noise may lead to unravel precisely noise levels in large-scale neuronal networks.

\end{document}